\theoremstyle{plain}
\newtheorem{thm}{Theorem}[section]
\newtheorem*{thm*}{Main Theorem, Part I}{}
\newtheorem*{thm2*}{Main Theorem, Part II}{}
\newtheorem{prop}[thm]{Proposition}
\newtheorem{lemm}[thm]{Lemma}
\theoremstyle{definition}
\theoremstyle{remark}
\newcommand\twoscript[2]{\substack{{#1} \\ {#2}}}
\newcommand\rmi{\mathrm{i}}
\newcommand\legendre[2]{\genfrac{(}{)}{}{}{#1}{#2}}
\newcommand\tbtmat[4]{\left(\begin{smallmatrix}{#1} & {#2} \\ {#3} & {#4}\end{smallmatrix}\right)}
\newcommand\tbtMat[4]{\begin{pmatrix}{#1} & {#2} \\ {#3} & {#4}\end{pmatrix}}
\newcommand*\abs[1]{\lvert#1\rvert}
\newcommand\etp[1]{\mathfrak{e}\left(#1\right)}
\newcommand\rad{\mathop{\mathrm{rad}}}
\newcommand\numZ{\mathbb{Z}}
\newcommand\numC{\mathbb{C}}
\newcommand\numgeq[2]{\mathbb{#1}_{\geq #2}}
\newcommand\slZ{\mathrm{SL}_2(\mathbb{Z})}
\newcommand\uhp{\mathfrak{H}}
\begin{document}


\baselineskip=17pt


\title[Fourier coefficients of eta-quotients]{A formula for Fourier coefficients of certain eta-quotients and their expansions as Eisenstein series}

\author{Xiao-Jie Zhu}
\address{School of Mathematical Sciences\\
Key Laboratory of MEA (Ministry of Education) \& Shanghai Key Laboratory of PMMP\\
East China Normal University\\
500 Dongchuan Road, 200241\\
Shanghai, P. R. China}
\email{zhuxiaojiemath@outlook.com}
\urladdr{https://orcid.org/0000-0002-6733-0755}

\date{}

\begin{abstract}
We give a list of $113$ holomorphic eta-quotients of integral weight ($66$ of which are primitive) and provide a uniform closed formula for their Fourier coefficients $c(l)$ where $l\equiv1\bmod{m}$ with some fixed $m\mid24$. The proof involves Wohlfahrt's extension of Hecke operators and a dimension formula for spaces of modular forms of general multiplier system. We further provide the expansions of these eta-quotients as linear combinations of standard Eisenstein series.
\end{abstract}

\subjclass[2020]{Primary 11F20, 11F30, Secondary 11F25, 11F11}

\keywords{Dedekind eta function, eta-quotient, Eisenstein series, Fourier coefficient, Hecke operator, dimension formula}

\maketitle


\section{Introduction}
\label{sec:Introduction}
Let $N$ be a positive integer and for each positive divisor $n$ of $N$ let $r_n$ be an integer. Set $\mathbf{r}:=(r_n)_{n\mid N}$. In this paper, we study the function
\begin{equation*}
f(\tau)=f_{\mathbf{r}}(\tau):=\prod_{n\mid N}\prod_{m=1}^{+\infty}(1-q^{nm})^{r_n},
\end{equation*}
where $q=\exp2\uppi\rmi\tau$ and $\tau\in\uhp:=\{z\in\numC\colon\Im z>0\}$. We only consider the situation $\sum_{n\mid N}n\cdot r_n=0$ and $\sum_{n\mid N}r_n\equiv0\pmod{2}$, in which $f$ can be written as an integral-weight eta-quotient $f(\tau)=\prod_{n\mid N}\eta(n\tau)^{r_n}$ with $\eta(\tau)=q^{1/24}\prod_{m=1}^{+\infty}(1-q^{m})$ being the Dedekind eta function (cf. \cite[Section 1]{Bha21}).

To state the first part of our main theorem, we need the following symbols:
\begin{equation}
\label{eq:kvarPimr}
k=\frac{1}{2}\sum_{n\mid N}r_n,\quad\varPi=\prod_{n\mid N}(Nn^{-1})^{\abs{r_n}},\quad m_\mathbf{r}=\frac{24}{\gcd(24,\sum_{n\mid N}nr_n,\sum_{n\mid N}Nn^{-1}r_n)}.
\end{equation}
In addition, we define $\delta=0$ (or $\delta=1$ respectively) if $\varPi$ takes the form $2^\alpha\cdot(4m+1)$ (or $2^\alpha\cdot(4m+3)$ respectively) with $\alpha,\,m\in\numgeq{Z}{0}$. Finally, $\legendre{a}{b}$ refers to the Kronecker-Jacobi symbol.

\begin{thm*}
Let $(N, \mathbf{r})$ be a pair listed in Table \ref{table:etaQuotients} (cf. Appendix \ref{sec:The table}) and let the Fourier expansion be $f_{\mathbf{r}}(\tau)=\sum_{n=0}^{+\infty}c_f(n)q^n$. Then for any positive integer $l$ with $l\equiv1\pmod{m_{\mathbf{r}}}$ we have
\begin{equation}
\label{eq:main}
c_f(l)=-r_1\cdot\sum_{\twoscript{a\mid l}{\gcd(a,N)=1}}\legendre{a}{\varPi}\cdot a^{k-1}\cdot\varepsilon_{l,a},
\end{equation}
where\footnote{The symbol $\varepsilon_{l,a}$ depends on $N$ and $\mathbf{r}$ as well.} $\varepsilon_{l,a}\in\{\pm1\}$ is defined as
\begin{equation*}
\varepsilon_{l,a}:=\begin{dcases}
1 & \text{ if } 2\mid l,\,2\nmid N,\\
(-1)^{\frac{(k+\delta)(a-1)}{2}} &\text{ otherwise}.
\end{dcases}
\end{equation*}
\end{thm*}

The functions $f_\mathbf{r}$ corresponding to the entries $(N, \mathbf{r})$ in Table \ref{table:etaQuotients} are exactly those integral-weight holomorphic eta-quotients nonvanishing at infinity and in one-dimensional modular form spaces whose dimensions can be computed by \cite[Theorem 4.2]{Zhu24}. Thus, to prove the above theorem, we use Wohlfahrt's extension of Hecke operators $T_l$ acting on $f_\mathbf{r}$, where $l\equiv1\pmod{m_{\mathbf{r}}}$. This leads to identities $T_lf_\mathbf{r}=c_l\cdot f_\mathbf{r}$ and analyzing these gives \eqref{eq:main}. The details are presented in Section \ref{sec:proof}.

As the second part of our main theorem, we provide the expansion of each eta-quotient in Table \ref{table:etaQuotients} as a linear combination of standard Eisenstein series. Let $B_m$ be the $m$-th Bernoulli number. For $k\in\numgeq{Z}{2}$, $t\in\numgeq{Z}{1}$ and $D_1$, $D_2$ being fundamental discriminants\footnote{An integer $d\neq0$ is called a fundamental discriminant if either $d\equiv1\bmod{4}$ and $d$ is square-free, or $4\mid d$ and $d/4\equiv2,3\bmod{4}$ is square-free. It is well-known that the function $n\mapsto\legendre{d}{n}$ is a primitive Dirichlet character modulo $\abs{d}$.}, if $(k,D_1,D_2)\neq(2,1,1)$, set
\begin{equation*}
E_{k}^{D_1,D_2,t}(\tau)=\delta_{\abs{D_1},1}\cdot L\left(1-k,\legendre{D_2}{\cdot}\right)+2\sum_{n=1}^{+\infty}\sigma_{k-1}^{D_1,D_2}(n)q^{tn},
\end{equation*}
where $\delta_{\abs{D_1},1}=1$ if $\abs{D_1}=1$ and $\delta_{\abs{D_1},1}=0$ otherwise, and where the special $L$-value is given by
\begin{equation*}
L\left(-s,\chi\right):=-\frac{1}{s+1}\sum_{m=0}^{s+1}\binom{s+1}{m}B_mM^{m-1}\sum_{r=1}^{M}\chi(r)r^{s+1-m},\quad s\in\numgeq{Z}{0}
\end{equation*}
for primitive Dirichlet character $\chi$ modulo $M$, and where
\begin{equation*}
\sigma_{k-1}^{D_1,D_2}(n):=\sum_{0<d\mid n}\legendre{D_1}{n/d}\legendre{D_2}{d}d^{k-1}
\end{equation*}
is the generalized power sum. If $(k,D_1,D_2)=(2,1,1)$ then we set
\begin{equation*}
E_{2}^{1,1,t}(\tau)=E_2^{1,1}(\tau)-tE_2^{1,1}(t\tau)
\end{equation*}
where
\begin{equation*}
E_2^{1,1}(\tau)=-\frac{1}{12}+2\sum_{n=1}^{+\infty}\sigma_{1}^{1,1}(n)q^n.
\end{equation*}
Finally, if $k=1$, we set
\begin{equation*}
E_{1}^{D_1,D_2,t}(\tau)=\delta_{\abs{D_1},1}\cdot L\left(0,\legendre{D_2}{\cdot}\right)+\delta_{\abs{D_2},1}\cdot L\left(0,\legendre{D_1}{\cdot}\right)+2\sum_{n=1}^{+\infty}\sigma_{0}^{D_1,D_2}(n)q^{tn}.
\end{equation*}
See \cite[\S 4.5, 4.6 and 4.8]{DS05} for more general definitions and basic properties of $E_{k}^{D_1,D_2,t}$.

\begin{thm2*}
Let $(N, \mathbf{r})$ be a pair listed in Table \ref{table:etaQuotients} (cf. Appendix \ref{sec:The table}) whose weight is $k$. Then $f_{\mathbf{r}}(\tau)$ can be uniquely expressed as a linear combination of the functions $E_{k}^{D_1,D_2,t}$ where $D_1$, $D_2$ are fundamental discriminants and $t\in\numgeq{Z}{1}$ such that $D_1D_2t\mid24N$ (and if $k=1$ such that $D_1\leq D_2$). The expressions are presented after Table \ref{table:etaQuotients}.
\end{thm2*}
The proof is a brute force verification. For each identity, we verify that the coefficients of the $q^n$-terms of both sides are the same for $n\leq n_0$ with $n_0$ sufficiently large with the help of SageMath programs and then use the Sturm bound (c.f., e.g., \cite[Coro. 5.6.14]{CS17}) to ensure the validity. The details are given in Section \ref{sec:proof2}.

Here is an example:
\begin{multline}
\label{eq:exampetaEis}
\eta(\tau)^{-1}\eta(2\tau)^2\eta(3\tau)\eta(6\tau)^{-3}\eta(12\tau)^5\eta(24\tau)^{-2}\\
=\frac{1}{4}E_1^{-3,8,1}+\frac{1}{2}E_1^{-3,8,2}+\frac{3}{4}E_1^{-3,8,3}+\frac{3}{2}E_1^{-3,8,6}+\frac{1}{4}E_1^{-24,1,1}-\frac{1}{2}E_1^{-24,1,2}-\frac{3}{4}E_1^{-24,1,3}+\frac{3}{2}E_1^{-24,1,6}.
\end{multline}

The $113$ functions studied in this note are also known as products with multiplicative series in the theory of basic hypergeometric series. Fine presented many such functions in \cite[\S 33]{Fin88}, some of which overlap ours. Since then there are some works establishing identities between eta-quotients and Eisenstein series; c.f., e.g, \cite{Wil12,YXJ13,MW24}.  Martin \cite{Mar96} obtained the complete list of all holomorphic eta-quotients that are simultaneously common Hecke eigenforms. The only functions in Table \ref{table:etaQuotients} that overlap Martin's table are $1^{-4}2^{10}4^{-4}$ and $1^{-2}2^{3}4^{3}8^{-2}$. It seems strange at first glance since \eqref{eq:Tlfclf} holds for all possible $l$ and all $f_{\mathbf{r}}$ in Table \ref{table:etaQuotients}, namely, $f_{\mathbf{r}}$ are also ``common Hecke eigenfunctions'' but are excluded from Martin's list. We explain this ``paradox'' in the third paragraph of Section \ref{sec:remarks}.

In this note, we use the computer algebra system SageMath \cite{Sage} constantly. For a nice account of eta-quotients, modular forms and related topics, see \cite{Ono04,DS05,KG11,CS17}. There are many excellent works concerned with eta-quotients and their relations with Eisenstein series and theta series; c.f., e.g., \cite{GH93,Mar96,LO13,Bha17,Oga18,CKL19,AI23,ZZ23}.

\section{Prerequisites}
\label{sec:Prerequisites}
First, we describe the dimension formula we need. For any positive integer $N$, the congruence subgroup $\Gamma_0(N)$ is the group of unimodular integral matrices $\tbtmat{a}{b}{c}{d}$ with $N\mid c$. Set $\slZ:=\Gamma_0(1)$ and
\begin{align}
m&=[\slZ\colon\Gamma_0(N)]=N\prod_{p\mid N}\left(1+\frac{1}{p}\right),\label{eq:mofGamma0N}\\
\varepsilon_2&=\begin{dcases}
\prod_{p \mid N}\left(1+\legendre{-4}{p}\right) & \text{ if } 4\nmid N,\\
0 & \text{ if } 4\mid N,
\end{dcases}\notag\\
\varepsilon_3&=\begin{dcases}
\prod_{p \mid N}\left(1+\legendre{-3}{p}\right) & \text{ if } 9\nmid N,\\
0 & \text{ if } 9\mid N.
\end{dcases}\notag
\end{align}

\begin{thm}
\label{thm:dimension}
Let $r_n$ be an integer for any $n\mid N$ and set $\mathbf{r}=(r_n)_{n\mid N}$. If $k>2-\frac{6}{m}\varepsilon_2-\frac{8}{m}\varepsilon_3-\frac{12}{m}\sum_{c\mid N}\phi(c,N/c)\cdot\left(1-\left\{\frac{x_c}{24}\right\}\right)$, then
\begin{equation}
\label{eq:dimension}
\dim_\numC M_k(\Gamma_0(N), \chi_\mathbf{r})=\frac{k-1}{12}m+\frac{1}{4}\varepsilon_2+\frac{1}{3}\varepsilon_3+\sum_{c\mid N}\phi(c,N/c)\cdot\left(\frac{1}{2}-\left\{\frac{x_c}{24}\right\}\right),
\end{equation}
where
\begin{equation*}
k=\frac{1}{2}\sum_{n\mid N}r_n,\quad x_c=\sum_{n\mid N}\frac{N}{(N,c^2)}\cdot\frac{(n,c)^2}{n}r_n\quad\text{  for }c\mid N.
\end{equation*}
\end{thm}
Note that the notation $\phi(c,N/c)$ refers to the Euler totient $\phi$ of the greatest common divisor of $c$ and $N/c$. The notation $(N,c^2)=\gcd(N,c^2)$ refers to the greatest common divisor. The notation $\left\{\frac{x_c}{24}\right\}$ means the fractional part of $\frac{x_c}{24}$. The space $M_k(\Gamma_0(N), \chi_\mathbf{r})$ refers to that consisting of modular forms of weight $k$, on group $\Gamma_0(N)$ and with multiplier system $\chi_\mathbf{r}$, which is the multiplier system of the eta-quotient $\prod_{n\mid N}\eta(n\tau)^{r_n}$ (c.f. \cite[eq. (16)]{ZZ23} or \cite[eq. (6)]{Zhu24}). For a proof of Theorem \ref{thm:dimension}, see \cite[Section 4]{Zhu24}.

As one can verify one by one, if $f_\mathbf{r}$ is the function corresponding to an entry $(N, \mathbf{r})$ in Table \ref{table:etaQuotients}, then $k\in\numZ$, $f_\mathbf{r}\in M_k(\Gamma_0(N), \chi_\mathbf{r})$, $\dim_\numC M_k(\Gamma_0(N), \chi_\mathbf{r})$ can be computed by Theorem \ref{thm:dimension} and $\dim_\numC M_k(\Gamma_0(N), \chi_\mathbf{r})=1$. Indeed, they exhaust\footnote{However, there may exist holomorphic eta-quotients $f_\mathbf{r}$ such that $f_\mathbf{r}\in M_k(\Gamma_0(N), \chi_\mathbf{r})$, $\dim_\numC M_k(\Gamma_0(N), \chi_\mathbf{r})=1$ but $\dim_\numC M_k(\Gamma_0(N), \chi_\mathbf{r})$ \emph{cannot} be computed by Theorem \ref{thm:dimension}, i.e., the inequality on $k$ does not hold.} all holomorphic eta-quotients with these properties as one can check using any computer algebra system based on the algorithm presented in \cite[Remark 9.4]{Zhu24}. See the last paragraph of Section \ref{sec:remarks}.

Then, we describe another tool---Wohlfahrt's extension of Hecke operators. The original reference is \cite{Woh57}. The following description is due to the author \cite[Theorem 9.15]{Zhu24}.
\begin{thm}
\label{thm:cTlfbycf}
Let $N\in\numgeq{Z}{1}$. For each $n\mid N$, let $r_n$ be an integer and set 
\begin{equation*}
\mathbf{r}=(r_n)_{n\mid N},\quad k'=\frac{1}{2}\sum_{n\mid N}r_n,\quad x_N=\sum_{n\mid N}nr_n,\quad \varPi=\prod_{n\mid N}(Nn^{-1})^{\abs{r_n}}.
\end{equation*}
Let $\chi_\mathbf{r}$ be the multiplier system of $\prod_{n\mid N}\eta(n\tau)^{r_n}$. Let $f$ be a meromorphic modular form (assumed to be holomorphic on $\uhp$) on $\Gamma_0(N)$ of weight $k\in k'+2\numZ$ with multiplier system $\chi_{\mathbf{r}}$ and let $l$ be a positive integer with $l\equiv1\pmod{m_{\mathbf{r}}}$. Then
\begin{enumerate}
  \item[(a)] we have the expansions
  \begin{equation*}
  f(\tau)=\sum_{n\in\frac{x_N}{24}+\numZ}c_f(n)q^n,\quad T_lf(\tau)=\sum_{n\in\frac{x_N}{24}+\numZ}c_{T_lf}(n)q^n,
  \end{equation*}
  where $c_f(n),\,c_{T_lf}(n)\in\numC$ are uniquely determined complex numbers and $T_l$ is the operator defined in \cite[Section 9.3]{Zhu24};
  \item[(b)] for $n\in\frac{x_N}{24}+\numZ$ we have
  \begin{equation}
  \label{eq:cTlfbycf}
  c_{T_lf}(n)=l^{-\frac{k}{2}}\sum_{\twoscript{a\mid l,\,d=l/a}{(a,N)=1}}\legendre{a}{\varPi}a^k\cdot c_f\left(\frac{ln}{a^2}\right)\sum_{\twoscript{0\leq b<d}{(a,b,d)=1}}\legendre{-Nb}{(a,d)}^{2k}\etp{bd\left(\frac{n}{l}-\frac{x_N}{24}\right)}\psi_{l,\mathbf{r}}(a,b)
  \end{equation}
  where
  \begin{equation*}
  \psi_{l,\mathbf{r}}(a,b)=\begin{dcases}
  \etp{-\frac{k+\delta}{4}(d-1)+\frac{(k+\delta)(l-1)(N-1)}{4}} &\text{ if } 2\nmid l,\\
  \etp{-\frac{k+\delta}{4}(a-1)-\frac{N(k+\delta)(1+\delta_1)}{4}b} &\text{ if } 2\mid l,\,2\mid N,\\
  1 &\text{ if } 2\mid l,\,2\nmid N,
  \end{dcases}
  \end{equation*}
  with $\delta_1=1$ if $2\mid l$, $4\mid N$, $k\in\frac{1}{2}+\numZ$, $v_2(\varPi)\equiv1\bmod{2}$ and $\delta_1=0$ otherwise. ($\delta$ has been given in Introduction.)
\end{enumerate}
\end{thm}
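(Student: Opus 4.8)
The plan is to unwind the definition of Wohlfahrt's operator $T_l$ from \cite[Section 9.3]{Zhu24} as a finite sum of slash operators and then extract Fourier coefficients by matching modes. Since $l\equiv1\pmod{m_{\mathbf{r}}}$, the operator sends the space with multiplier $\chi_{\mathbf{r}}$ to itself, so it suffices to work with a complete set of representatives for the relevant coset space, which we may take to be the upper-triangular matrices $M_{a,b}=\tbtmat{a}{b}{0}{d}$ with $ad=l$, $(a,N)=1$, $0\leq b<d$ and $(a,b,d)=1$. Writing $T_lf=\sum_{a,b}\overline{\chi_{\mathbf{r}}(M_{a,b})}\,(f|_kM_{a,b})$ with the slash normalization fixed in \cite{Zhu24}, part (a) follows at once: each $M_{a,b}$ is upper triangular with positive diagonal, so $f|_kM_{a,b}$ is holomorphic on $\uhp$ and periodic with the same denominator $24$, and the congruence on $l$ guarantees that the sum again transforms under $\chi_{\mathbf{r}}$; hence $T_lf$ has a Fourier expansion supported on $\frac{x_N}{24}+\numZ$.

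For part (b) I would compute the slash action explicitly. With $\tbtmat{a}{b}{0}{d}\tau=\frac{a\tau+b}{d}$ and $\det M_{a,b}=l$ one gets $(f|_kM_{a,b})(\tau)=l^{k/2}d^{-k}\sum_{m}c_f(m)\etp{\tfrac{ma\tau}{d}}\etp{\tfrac{mb}{d}}$, and the prefactor simplifies to $l^{k/2}d^{-k}=a^kl^{-k/2}$, which is exactly the factor $l^{-k/2}a^k$ appearing in \eqref{eq:cTlfbycf}. The mode $q^{n}$ is produced precisely by the index $m$ with $\frac{ma}{d}=n$, i.e. $m=\frac{ln}{a^2}$, explaining the argument $c_f(ln/a^2)$; the constraint $\frac{ln}{a^2}\in\frac{x_N}{24}+\numZ$ is what forces the surviving terms. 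The $b$-dependent phase $\etp{mb/d}$ coming from the slash, combined with the $b$-dependent part of $\chi_{\mathbf{r}}(M_{a,b})$, reassembles into the main exponential $\etp{bd(\frac{n}{l}-\frac{x_N}{24})}$ together with the residual root of unity $\psi_{l,\mathbf{r}}(a,b)$.

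The heart of the argument, and the step I expect to be the main obstacle, is the explicit evaluation of the extended multiplier $\chi_{\mathbf{r}}(M_{a,b})$, which produces the three surviving factors $\legendre{a}{\varPi}$, $\legendre{-Nb}{(a,d)}^{2k}$ and $\psi_{l,\mathbf{r}}(a,b)$. I would factor $M_{a,b}$ through $\Gamma_0(N)$ and a diagonal scaling and propagate the $\eta$-multiplier across each factor using the transformation law of $\eta$: the quadratic-reciprocity form of that law yields the Kronecker symbols $\legendre{a}{\varPi}$ and $\legendre{-Nb}{(a,d)}^{2k}$, while the accumulated Dedekind-sum contribution supplies the $24$th root of unity $\psi_{l,\mathbf{r}}(a,b)$. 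The genuinely delicate bookkeeping is the $2$-adic case analysis: the three branches of $\psi_{l,\mathbf{r}}$ (odd $l$; even $l$ with $2\mid N$; even $l$ with $2\nmid N$) and the auxiliary parameters $\delta,\delta_1$ encode the half-integral-weight and even-level behavior of the Dedekind sums, and tracking every sign and every power of $\rmi$ here is where the real work lies.

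Finally I would show that the inner sum over $b$ is well defined and finite: the factor $\legendre{-Nb}{(a,d)}^{2k}$ depends only on $b\bmod(a,d)$ and, with the condition $(a,b,d)=1$, turns $\sum_b$ into a Ramanujan-type exponential sum whose behaviour is controlled by the integrality of $ln/a^2$. This closes the identity \eqref{eq:cTlfbycf} between the Fourier coefficients of $f$ and of $T_lf$.
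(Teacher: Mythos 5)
First, note that this paper does not actually prove Theorem \ref{thm:cTlfbycf}: it is imported verbatim from \cite[Theorem 9.15]{Zhu24}, so there is no in-text proof to compare against. Judged on its own merits, your formal skeleton is sound and is indeed the standard Wohlfahrt-style route: take primitive upper-triangular representatives $\tbtmat{a}{b}{0}{d}$ with $ad=l$, $(a,N)=1$, $(a,b,d)=1$; the slash prefactor $l^{k/2}d^{-k}=a^kl^{-k/2}$ and the mode-matching $m=ln/a^2$ are correct and do explain the shape of \eqref{eq:cTlfbycf}, and the $\etp{bd(\tfrac{n}{l}-\tfrac{x_N}{24})}$ phase plausibly arises from combining $\etp{mb/d}$ with the translation part of the eta-multiplier.

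However, there is a genuine gap, and you name it yourself: everything that makes the theorem nontrivial is announced rather than proved. Two things are missing. (i) You write $T_lf=\sum\overline{\chi_{\mathbf{r}}(M_{a,b})}(f|_kM_{a,b})$, but $\chi_{\mathbf{r}}$ is only defined on $\Gamma_0(N)$ (or its cover); the whole point of Wohlfahrt's construction is that the multiplier admits a consistent extension to the determinant-$l$ matrices precisely because $l\equiv1\pmod{m_{\mathbf{r}}}$, and you never establish this well-definedness, which is also what part (a) really rests on. (ii) The explicit evaluation of that extended multiplier --- the source of $\legendre{a}{\varPi}$, $\legendre{-Nb}{(a,d)}^{2k}$, and the three-branch $\psi_{l,\mathbf{r}}(a,b)$ with the parameters $\delta$ and $\delta_1$ --- is the entire arithmetic content of part (b), and ``I would factor $M_{a,b}$ through $\Gamma_0(N)$ and a diagonal scaling and propagate the $\eta$-multiplier'' is a plan, not a computation. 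In particular nothing in your write-up could detect an error in, say, the sign conventions of $\delta_1$ or the exponent $(k+\delta)(l-1)(N-1)/4$. As it stands the proposal verifies the bookkeeping that is easy to verify and defers exactly the part that \cite[Theorem 9.15]{Zhu24} exists to settle.
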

Note that $\legendre{a}{\varPi}$ and $\legendre{-Nb}{(a,d)}$ are Kronecker-Jacobi symbols, not fractions, and that we set $c_f\left(\frac{ln}{a^2}\right)=0$ if $\frac{ln}{a^2}\not\in\frac{x_N}{24}+\numZ$. The notation $v_2(\varPi)$ refers to the $2$-adic exponential valuation and $\etp{x}:=\exp2\uppi\rmi x$. For a proof of this theorem, see \cite[Theorem 9.15]{Zhu24}.

The right-hand side of \eqref{eq:cTlfbycf} can be further simplified by working out the sum over $b$. We need such a simplification for $k\in\numZ$:
\begin{prop}
In Theorem \ref{thm:cTlfbycf} suppose $k\in\numZ$ (equivalently, $k'\in\numZ$), and
\begin{equation}
\label{eq:conditionlNdelta}
2\mid l,\,2\mid N,\, N(k+\delta)\equiv2\bmod{4} \text{ do not simultaneously hold.}
\end{equation}
Then for $n\in\frac{x_N}{24}+\numZ$ we have $n-\frac{lx_N}{24}\in\numZ$ and
\begin{equation}
\label{eq:cTlfbycfkIntegral}
c_{T_lf}(n)=l^{1-\frac{k}{2}}\sum_{\twoscript{a\mid l,\,d=l/a}{(a,N)=1}}\legendre{a}{\varPi}a^{k-1}\psi_{l,\mathbf{r}}(a,1) c_f\left(\frac{ln}{a^2}\right)\cdot\sum_{\twoscript{t\mid(a,d)}{(a/t)\mid n-lx_N/24}}\frac{\mu(t)}{t}.
\end{equation}
In particular, if $\rad(n-\frac{lx_N}{24},l)\mid N$, then $\frac{ln}{a^2}\in\frac{x_N}{24}+\numZ$ for any $a^2\mid l$ with $(a,N)=1$, and
\begin{equation*}
c_{T_lf}(n)=l^{1-\frac{k}{2}}\sum_{\twoscript{a^2\mid l,\,a>0}{(a,N)=1}}\legendre{a}{\varPi}\mu(a)a^{k-2}\psi_{l,\mathbf{r}}(a,1) c_f\left(\frac{ln}{a^2}\right).
\end{equation*}
\end{prop}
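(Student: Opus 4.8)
The plan is to start from formula \eqref{eq:cTlfbycf} of Theorem \ref{thm:cTlfbycf} and evaluate the inner sum over $b$ explicitly. First I would dispose of the integrality claim. Writing $n=\frac{x_N}{24}+m$ with $m\in\numZ$ and setting $M:=n-\frac{lx_N}{24}$, I note that $m_\mathbf{r}=24/g$ with $g=\gcd(24,x_N,\sum_{n\mid N}Nn^{-1}r_n)$, so $g\mid x_N$ and $l\equiv1\pmod{m_\mathbf{r}}$ gives $\frac{24}{g}\mid(l-1)$; hence $\frac{(l-1)x_N}{24}=\frac{(l-1)}{24/g}\cdot\frac{x_N}{g}\in\numZ$ and therefore $M=m-\frac{(l-1)x_N}{24}\in\numZ$. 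The remaining task is to show that the coefficient of $c_f(ln/a^2)$ in \eqref{eq:cTlfbycf} coincides, for every admissible $a$, with the one in \eqref{eq:cTlfbycfkIntegral}.

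Next I would remove the two ingredients of \eqref{eq:cTlfbycf} that happen to be independent of $b$. Since $k\in\numZ$ the exponent $2k$ is even, and for every $b$ occurring in the sum one has $(a,b,d)=1$ and $(a,N)=1$; thus $(a,d)$ is coprime to both $N$ and $b$, so $\legendre{-Nb}{(a,d)}=\pm1$ and $\legendre{-Nb}{(a,d)}^{2k}=1$, eliminating this factor. I would then verify $\psi_{l,\mathbf{r}}(a,b)=\psi_{l,\mathbf{r}}(a,1)$: in the cases $2\nmid l$ and $2\mid l,\ 2\nmid N$ this is immediate from the definition, while for $2\mid l,\ 2\mid N$ the integrality $k\in\numZ$ forces $\delta_1=0$, and hypothesis \eqref{eq:conditionlNdelta} forbids $N(k+\delta)\equiv2\bmod4$, so $4\mid N(k+\delta)$ and $\etp{-\frac{N(k+\delta)}{4}b}=1$. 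With both observations in hand $\psi_{l,\mathbf{r}}(a,1)$ factors out of the inner sum.

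The heart of the argument is the geometric sum, and this is the step I expect to be the main obstacle. Using $ad=l$ one checks $bd\bigl(\frac{n}{l}-\frac{x_N}{24}\bigr)=\frac{bM}{a}$, so the inner sum is $S=\sum_{0\le b<d,\ (a,b,d)=1}\etp{\frac{bM}{a}}$. Since $(a,b,d)=\gcd(b,(a,d))$, M\"obius inversion of the coprimality condition and the substitution $b=tb'$ give $S=\sum_{t\mid(a,d)}\mu(t)\sum_{0\le b'<d/t}\etp{\frac{Mt}{a}b'}$, a combination of geometric progressions with ratio $\etp{Mt/a}$. The delicate point is that only the progressions of ratio $1$ may survive: a summand contributes nothing unless $c_f(ln/a^2)\ne0$, i.e. $\frac{ln}{a^2}\in\frac{x_N}{24}+\numZ$, and I would show this coset condition is equivalent to $a\mid dM$. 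Indeed $\frac{ln}{a^2}-\frac{x_N}{24}=\frac{(d^2-1)x_N}{24}+\frac{dM}{a}$, so it suffices that $\frac{(d^2-1)x_N}{24}\in\numZ$; this rests on the arithmetic of divisors of $24$: from $d\mid l$ and $l\equiv1\pmod{24/g}$ we get $\gcd(d,24/g)=1$, and since $(\numZ/24\numZ)^\times$ has exponent $2$ every unit modulo a divisor of $24$ squares to $1$, whence $d^2\equiv1\pmod{24/g}$ and $\frac{24}{g}\mid(d^2-1)$. Granting $a\mid dM$, the ratio satisfies $\etp{Mt/a}^{d/t}=\etp{Md/a}=1$; so each progression with $\etp{Mt/a}\ne1$ sums to $0$, the one with $\etp{Mt/a}=1$ (i.e. $(a/t)\mid M$) sums to $d/t$, and hence $S=d\sum_{t\mid(a,d),\ (a/t)\mid M}\mu(t)/t$. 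Substituting and absorbing one power each of $a$ and $l$ yields \eqref{eq:cTlfbycfkIntegral}.

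For the concluding assertion I would exploit $\rad(n-\frac{lx_N}{24},l)\mid N$. If $(a,N)=1$ then every prime dividing $a$ divides $l$ but not $N$, so by hypothesis it does not divide $M$, giving $\gcd(a,M)=1$. The condition $(a/t)\mid M$ then forces $a/t=1$, i.e. $t=a$, and $t\mid(a,d)$ forces $a\mid d$, that is $a^2\mid l$; for such $a$ the $t$-sum reduces to $\mu(a)/a$, while for $a^2\nmid l$ it is empty and the term vanishes. Finally $a^2\mid l$ yields $a\mid d\mid dM$, so $\frac{ln}{a^2}\in\frac{x_N}{24}+\numZ$ as claimed. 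Inserting $\mu(a)/a$ and combining $a^{k-1}\cdot a^{-1}=a^{k-2}$ produces the stated closed form.
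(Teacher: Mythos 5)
Your proof is correct and follows essentially the same route as the paper's: M\"obius inversion of the coprimality condition on $b$, evaluation of the resulting geometric progressions, and the observation that the common numerator $1-\etp{d^2s}$ vanishes because $\tfrac{(d^2-1)x_N}{24}\in\numZ$. Your treatment is slightly more uniform (you handle all three parity cases at once via the exponent-$2$ property of $(\numZ/m\numZ)^\times$ for $m\mid 24$, where the paper argues by cases on $m_\mathbf{r}$ and presents only one case), and you also supply the proof of the final assertion, which the paper omits.
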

In the above proposition, $\mu(x)$ denotes the M\"obius function, that is, $\mu(p_1p_2\cdots p_t)=(-1)^t$ if $p_1, \dots,p_t$ are distinct primes and $\mu(x)=0$ is $x$ has nontrivial square factors, and $\rad(a,b):=\prod_{p\mid \gcd(a,b)}p$ where $p$ denotes a prime.
\begin{proof}
For $n\in\frac{x_N}{24}+\numZ$, we have $n-\frac{lx_N}{24}=n-\frac{x_N}{24}-\frac{(l-1)x_N}{24}\in\numZ$ since $24\mid (l-1)x_N$ by $l\equiv1\pmod{m_{\mathbf{r}}}$. For \eqref{eq:cTlfbycfkIntegral} there are three cases:
\begin{equation*}
\text{(a) } 2\nmid l,\qquad \text{(b) }2\mid l,\,2\mid N,\qquad \text{(c) }2\mid l,\,2\nmid N.
\end{equation*}
The proof of these cases are similar so we only present here that of case (b). By \eqref{eq:conditionlNdelta}, $\psi_{l,\mathbf{r}}(a,b)$ is independent of $b$ and hence $\psi_{l,\mathbf{r}}(a,b)=\psi_{l,\mathbf{r}}(a,1)$. Since $k\in\numZ$,
\begin{equation}
\label{eq:sumoverb}
\sum_{\twoscript{0\leq b<d}{(a,b,d)=1}}\legendre{-Nb}{(a,d)}^{2k}\etp{bd\left(\frac{n}{l}-\frac{x_N}{24}\right)}\psi_{l,\mathbf{r}}(a,b)=\psi_{l,\mathbf{r}}(a,1)\cdot\sum_{\twoscript{0\leq b<d}{(a,b,d)=1}}\etp{bd\left(\frac{n}{l}-\frac{x_N}{24}\right)}.
\end{equation}
Set $s=\frac{n}{l}-\frac{x_N}{24}$. Noting that $\sum_{t\mid(a,b,d)}\mu(t)=1$ if $(a,b,d)=1$ and $\sum_{t\mid(a,b,d)}\mu(t)=0$ otherwise, we have
\begin{align}
\sum_{\twoscript{0\leq b<d}{(a,b,d)=1}}\etp{bds}&=\sum_{0\leq b<d}\sum_{t\mid(a,b,d)}\mu(t)\etp{bds}\notag\\
&=\sum_{\twoscript{t\mid(a,d)}{tds\in\numZ}}\frac{\mu(t)d}{t}+\sum_{\twoscript{t\mid(a,d)}{tds\not\in\numZ}}\frac{1-\etp{d^2s}}{1-\etp{tds}}.\label{eq:sumovert}
\end{align}
We will prove $1-\etp{d^2s}=0$, which will imply the second sum in the last line vanishes. Indeed,
\begin{equation*}
d^2s=\left(\frac{ln}{a^2}-\frac{x_N}{24}\right)-\frac{(d^2-1)x_N}{24}.
\end{equation*}
We may assume at the beginning that $\frac{ln}{a^2}-\frac{x_N}{24}\in\numZ$ for otherwise $c_f\left(\frac{ln}{a^2}\right)=0$. Since $2\mid l$, $m_\mathbf{r}=1$ or $3$. If $m_\mathbf{r}=1$ then $24\mid x_N$ and hence $\frac{(d^2-1)x_N}{24}\in\numZ$. If $m_\mathbf{r}=3$, then $8\mid x_N$ and $3\nmid l$. Hence $3\nmid d$, which implies $3\mid d^2-1$. Again $\frac{(d^2-1)x_N}{24}\in\numZ$. This concludes the proof of $d^2s\in\numZ$, and hence $1-\etp{d^2s}=0$. Now inserting \eqref{eq:sumovert}, \eqref{eq:sumoverb} into \eqref{eq:cTlfbycf} we obtain \eqref{eq:cTlfbycfkIntegral}. The proof of the assertion under the condition $\rad(n-\frac{lx_N}{24},l)\mid N$ is omitted.
\end{proof}

\section{The proof of Main Theorem, Part I}
\label{sec:proof}
\begin{lemm}
Let $(N, \mathbf{r})$ be a pair listed in Table \ref{table:etaQuotients} (cf. Appendix \ref{sec:The table}). Then \eqref{eq:conditionlNdelta} is satisfied.
\end{lemm}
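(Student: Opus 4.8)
The plan is to isolate the only part of \eqref{eq:conditionlNdelta} that depends on $l$, namely the clause $2\mid l$, and thereby reduce the whole statement to a parity check over the finitely many entries of Table \ref{table:etaQuotients}. First I would observe that $2\mid N$ and $N(k+\delta)\equiv 2\pmod 4$ are properties of the entry $(N,\mathbf{r})$ alone, while $l$ enters only through $2\mid l$. Since we only ever apply the operators $T_l$ for $l\equiv 1\pmod{m_{\mathbf{r}}}$, an even such $l$ exists if and only if $m_{\mathbf{r}}$ is odd: when $m_{\mathbf{r}}$ is even every $l\equiv 1\pmod{m_{\mathbf{r}}}$ is odd, whereas when $\gcd(2,m_{\mathbf{r}})=1$ the Chinese Remainder Theorem produces a positive $l$ that is even and satisfies $l\equiv1\pmod{m_{\mathbf{r}}}$. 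Because $m_{\mathbf{r}}\mid 24$, being odd forces $m_{\mathbf{r}}\in\{1,3\}$. Hence \eqref{eq:conditionlNdelta} can fail for some admissible $l$ only when $m_{\mathbf{r}}\in\{1,3\}$.

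Next I would simplify the two $l$-independent clauses. Writing $v_2$ for the $2$-adic valuation, the relation $N(k+\delta)\equiv 2\pmod 4$ says $v_2(N)+v_2(k+\delta)=1$; combined with $2\mid N$ this forces $v_2(N)=1$ and $v_2(k+\delta)=0$, i.e. $N\equiv 2\pmod 4$ and $k+\delta$ odd, and conversely these two conditions return both clauses. Therefore \eqref{eq:conditionlNdelta} fails for some admissible $l$ if and only if the entry satisfies, simultaneously, $m_{\mathbf{r}}\in\{1,3\}$, $N\equiv 2\pmod 4$ and $k+\delta$ odd. It now suffices to show that no entry of Table \ref{table:etaQuotients} meets all three conditions at once.

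Finally I would carry out this verification over the table. The three conditions are read off each entry directly: $N$ and $k=\tfrac12\sum_n r_n$ are immediate, $m_{\mathbf{r}}=24/\gcd(24,\sum_n nr_n,\sum_n (N/n)r_n)$ is a gcd computation, and $\delta\in\{0,1\}$ is determined by the residue modulo $4$ of the odd part of $\varPi=\prod_n(N/n)^{\abs{r_n}}$. I would restrict attention to the entries with $N\equiv 2\pmod 4$ and $m_{\mathbf{r}}\in\{1,3\}$ and confirm that each such entry has $k+\delta$ even, most conveniently with SageMath as used throughout the paper. The main obstacle is bookkeeping rather than conceptual: one must compute $\delta$ correctly (hence track the odd part of $\varPi$ modulo $4$) and verify that the parity of $k+\delta$ never coincides with $N\equiv 2\pmod4$ among the relevant entries. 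A cleaner alternative I would look for is a \emph{structural} reason that makes the check uniform and avoids enumerating the table: a congruence linking $k$, $\delta$ and $v_2(N)$ that is forced by $8\mid\sum_n (N/n)r_n$ (the condition equivalent to $m_{\mathbf{r}}$ being odd, using $\sum_n nr_n=0$) together with the integrality of $k$.
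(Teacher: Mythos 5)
Your proposal is correct and follows essentially the same route as the paper: both reduce the clause $2\mid l$ (via $l\equiv1\pmod{m_{\mathbf{r}}}$ and $m_{\mathbf{r}}\mid 24$) to $m_{\mathbf{r}}\in\{1,3\}$, and then verify by a finite check over the relevant table entries that $4\mid N(k+\delta)$ — your restriction to $N\equiv2\pmod 4$ with $k+\delta$ odd is just a mild sharpening of the same case analysis. The check does pass (the only entries with $2\mid N$ and $m_{\mathbf{r}}\in\{1,3\}$ and $N\equiv2\pmod4$ are the two with $N=6$ and $N=18$, both having $k=\delta=1$), so your argument is complete.
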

\begin{proof}
If $2\mid l$ and $2\mid N$, then $m_\mathbf{r}=1$ or $3$. Thus, by looking at the entries with $2\mid N$ and $m_\mathbf{r}=1$ or $3$ in Table \ref{table:etaQuotients} one by one, we find that $4\mid N(k+\delta)$, hence \eqref{eq:conditionlNdelta} is true.
\end{proof}
The following lemma is key to the proof of the first part of the main theorem.
\begin{lemm}
Let the notations and assumptions be as in the main theorem. We have
\begin{equation}
\label{eq:recursivecf}
\sum_{\twoscript{a^2\mid l,\,a>0}{(a,N)=1}}\legendre{a}{\varPi}\mu(a)a^{k-2}\psi_{l,\mathbf{r}}(a,1) c_f\left(\frac{l}{a^2}\right)
=-r_1\cdot\sum_{\twoscript{a\mid l}{(a,N)=1}}\legendre{a}{\varPi}a^{k-1}\psi_{l,\mathbf{r}}(a,1)\cdot\sum_{t\mid(a,l/a)}\frac{\mu(t)}{t}.
\end{equation}
\end{lemm}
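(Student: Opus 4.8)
The plan is to deduce \eqref{eq:recursivecf} from the scalar relation forced by one-dimensionality, rather than to verify it by directly manipulating the divisor sums. Write $f=f_\mathbf{r}$. Since $(N,\mathbf{r})$ is an entry of Table \ref{table:etaQuotients}, we have $k\in\numZ$, $f\in M_k(\Gamma_0(N),\chi_\mathbf{r})$, and $\dim_\numC M_k(\Gamma_0(N),\chi_\mathbf{r})=1$, so $f$ spans the space; moreover the standing hypothesis $\sum_{n\mid N}nr_n=0$ gives $x_N=0$, so $f$ expands in integral powers of $q$. First I would read off the two lowest coefficients straight from $f(\tau)=\prod_{n\mid N}\prod_{m\geq1}(1-q^{nm})^{r_n}$: the constant term is $c_f(0)=1$, while the only factor contributing a $q^1$ is the one with $n=m=1$, giving $c_f(1)=-r_1$.

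The next step is conceptual. Wohlfahrt's operator $T_l$ with $l\equiv1\pmod{m_\mathbf{r}}$ maps $M_k(\Gamma_0(N),\chi_\mathbf{r})$ into itself (see \cite{Zhu24}), so that $T_lf=\lambda_l f$ for some scalar $\lambda_l$; this is the eigenform identity announced in the introduction. Comparing Fourier coefficients, which are unique by part (a) of Theorem \ref{thm:cTlfbycf}, gives $c_{T_lf}(n)=\lambda_l\,c_f(n)$ for every $n$. Specializing to $n=0$ and $n=1$ and eliminating $\lambda_l$ with the values $c_f(0)=1$ and $c_f(1)=-r_1$ yields the single scalar identity
\begin{equation*}
c_{T_lf}(1)=-r_1\,c_{T_lf}(0).
\end{equation*}

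I would then evaluate both sides with the Proposition, which is applicable because the preceding lemma guarantees \eqref{eq:conditionlNdelta} and because $k\in\numZ$. For $n=1$ the radical hypothesis holds, since $\rad(1-lx_N/24,l)=\rad(1,l)=1\mid N$, so the special case of the Proposition gives that $c_{T_lf}(1)$ equals $l^{1-\frac{k}{2}}$ times the left-hand side of \eqref{eq:recursivecf}. For $n=0$ the radical hypothesis fails in general (it would read $\rad(l)\mid N$), so I would instead use the general formula \eqref{eq:cTlfbycfkIntegral}: with $n=x_N=0$ the constraint $(a/t)\mid n-lx_N/24$ becomes $(a/t)\mid0$, which is vacuous, and $c_f(0)=1$, so that $c_{T_lf}(0)=l^{1-\frac{k}{2}}\sum_{a\mid l,\,(a,N)=1}\legendre{a}{\varPi}a^{k-1}\psi_{l,\mathbf{r}}(a,1)\sum_{t\mid(a,l/a)}\mu(t)/t$. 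Substituting both evaluations into $c_{T_lf}(1)=-r_1\,c_{T_lf}(0)$ and cancelling the common factor $l^{1-\frac{k}{2}}$ produces exactly \eqref{eq:recursivecf}.

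The only step that requires anything beyond bookkeeping is the claim that $T_l$ preserves $M_k(\Gamma_0(N),\chi_\mathbf{r})$, which is what legitimizes the eigenvalue equation; this is precisely where Wohlfahrt's extension together with the one-dimensionality of the space does the real work. Once that is granted, the remaining care is only in matching $n=1$ to the special case and $n=0$ to the general formula of the Proposition and then cancelling $l^{1-\frac{k}{2}}$.
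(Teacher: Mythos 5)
Your proposal is correct and follows essentially the same route as the paper: one-dimensionality gives $T_lf=\lambda_l f$, and comparing the coefficient formula \eqref{eq:cTlfbycfkIntegral} at $n=0$ (where $c_f(0)=1$) and at $n=1$ (where $c_f(1)=-r_1$ and the radical condition holds trivially) yields \eqref{eq:recursivecf} after cancelling $l^{1-\frac{k}{2}}$. The only difference is cosmetic: you eliminate the eigenvalue via $c_{T_lf}(1)=-r_1c_{T_lf}(0)$, whereas the paper first solves for $c_l$ from the $n=0$ case.
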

\begin{proof}
As explained in the paragraphs following Theorem \ref{thm:dimension}, $f_\mathbf{r}\in M_k(\Gamma_0(N), \chi_\mathbf{r})$ and $\dim_\numC M_k(\Gamma_0(N), \chi_\mathbf{r})=1$. By \cite[Theorem 9.9]{Zhu24}, $T_l$ maps $M_k(\Gamma_0(N), \chi_\mathbf{r})$ into itself and hence
\begin{equation}
\label{eq:Tlfclf}
T_lf_\mathbf{r}=c_l\cdot f_\mathbf{r}
\end{equation}
with some $c_l\in\numC$. Equivalently, $c_{T_lf}(n)=c_lc_f(n)$ for $n\in\frac{x_N}{24}+\numZ$. Since $x_N=\sum_{j\mid N}j\cdot r_j=0$ we have $n\in\numZ$. Setting $n=0$ and noting $c_f(0)=1$ we find, according to \eqref{eq:cTlfbycfkIntegral}, that $l^{\frac{k}{2}-1}c_l$ equals the sum over $a$ in the right-hand side of \eqref{eq:recursivecf}. Now setting $n=1$, noting $c_f(1)=-r_1$ and using \eqref{eq:cTlfbycfkIntegral} again we arrive at \eqref{eq:recursivecf}.
\end{proof}

The first part of the main theorem now follows from this lemma and induction:
\begin{proof}[Proof of Main Theorem, Part I]
If for any prime $p\nmid N$ we have $p^2\nmid l$, then \eqref{eq:recursivecf} implies that
\begin{equation*}
c_f(l)=-r_1\cdot\sum_{\twoscript{a\mid l}{(a,N)=1}}\legendre{a}{\varPi}a^{k-1}\frac{\psi_{l,\mathbf{r}}(a,1)}{\psi_{l,\mathbf{r}}(1,1)}.
\end{equation*}
It is immediate that $\frac{\psi_{l,\mathbf{r}}(a,1)}{\psi_{l,\mathbf{r}}(1,1)}=\varepsilon_{l,a}$ and hence \eqref{eq:main} follows.

Now we use induction on the number of factors of $l$. In the induction step, we have, according to \eqref{eq:recursivecf}, that
\begin{equation}
\label{eq:recursivecf2}
c_f(l)=-r_1\cdot\sum_{\twoscript{a\mid l}{(a,N)=1}}\legendre{a}{\varPi}a^{k-1}\frac{\psi_{l,\mathbf{r}}(a,1)}{\psi_{l,\mathbf{r}}(1,1)}\cdot\sum_{t\mid(a,l/a)}\frac{\mu(t)}{t}-\sum_{\twoscript{a^2\mid l,\,a>1}{(a,N)=1}}\legendre{a}{\varPi}\mu(a)a^{k-2}\frac{\psi_{l,\mathbf{r}}(a,1)}{\psi_{l,\mathbf{r}}(1,1)} c_f\left(\frac{l}{a^2}\right).
\end{equation}
We need to prove that $l/a^2\equiv1\bmod{m_\mathbf{r}}$. If $2,3\nmid a$, then $a^2\equiv1\bmod{24}$ and hence $a^2\equiv1\bmod{m_\mathbf{r}}$ and $l/a^2\equiv1\bmod{m_\mathbf{r}}$. If $2\mid a$, then $4\mid l$ and hence $2\nmid m_\mathbf{r}$, that is, $m_\mathbf{r}=1$ or $3$. If $m_\mathbf{r}=1$, obviously $l/a^2\equiv1\bmod{m_\mathbf{r}}$; if $m_\mathbf{r}=3$ then $a^2\equiv1\bmod{3}$ and $l\equiv1\bmod{m_\mathbf{r}}$ together imply $l/a^2\equiv1\bmod{m_\mathbf{r}}$. Finally, if $2\nmid a$ and $3\mid a$, then $a^2\equiv1\bmod{8}$ and $3\mid l$. The latter implies $3\nmid m_\mathbf{r}$ and hence $m_\mathbf{r} \mid 8$. Thus, $a^2\equiv1\bmod{m_\mathbf{r}}$, which implies $l/a^2\equiv1\bmod{m_\mathbf{r}}$.

Therefore, we can insert the induction hypothesis into each $c_f\left(\frac{l}{a^2}\right)$ (if there exists at least one) in the right-hand side of \eqref{eq:recursivecf2} and obtain
\begin{multline*}
c_f(l)=-r_1\cdot\left(\sum_{\twoscript{a\mid l}{(a,N)=1}}\legendre{a}{\varPi}a^{k-1}\frac{\psi_{l,\mathbf{r}}(a,1)}{\psi_{l,\mathbf{r}}(1,1)}\cdot\sum_{t\mid(a,l/a)}\frac{\mu(t)}{t}\right.\\
\left.-\sum_{\twoscript{a_1^2\mid l,\,a_1>1}{(a_1,N)=1}}\sum_{\twoscript{a_2\mid l/a_1^2}{(a_2,N)=1}}\legendre{a_1a_2}{\varPi}(a_1a_2)^{k-1}\frac{\psi_{l,\mathbf{r}}(a_1,1)\psi_{l/a_1^2,\mathbf{r}}(a_2,1)}{\psi_{l,\mathbf{r}}(1,1)\psi_{l/a_1^2,\mathbf{r}}(1,1)}\cdot\frac{\mu(a_1)}{a_1}\right).
\end{multline*}
One can verify immediately using the definition that
\begin{equation}
\label{eq:psiIdentity}
\frac{\psi_{l,\mathbf{r}}(a_1,1)\psi_{l/a_1^2,\mathbf{r}}(a_2,1)}{\psi_{l,\mathbf{r}}(1,1)\psi_{l/a_1^2,\mathbf{r}}(1,1)}=\frac{\psi_{l,\mathbf{r}}(a_1a_2,1)}{\psi_{l,\mathbf{r}}(1,1)}
\end{equation}
if $2\nmid l$, or if $2\mid l$, $2\mid N$, or if $l\equiv2\bmod{4}$, $2\nmid N$. However, if $4\mid l$ and $2\nmid N$, then there is no reason that \eqref{eq:psiIdentity} must hold for all $a_1,a_2$ in general. Nevertheless, if we have $2\mid k+\delta$, then \eqref{eq:psiIdentity} holds with both sides equal to $1$, which fortunately is the case for all entries with $2\nmid N$ in Table \ref{table:etaQuotients}. It follows that
\begin{align*}
c_f(l)&=-r_1\cdot\sum_{\twoscript{a\mid l}{(a,N)=1}}\legendre{a}{\varPi}a^{k-1}\frac{\psi_{l,\mathbf{r}}(a,1)}{\psi_{l,\mathbf{r}}(1,1)}\cdot\left(\sum_{t\mid(a,l/a)}\frac{\mu(t)}{t}-\sum_{\twoscript{a_1a_2=a}{a_1^2a_2\mid l,\, a_1>1}}\frac{\mu(a_1)}{a_1}\right)\\
&=-r_1\cdot\sum_{\twoscript{a\mid l}{(a,N)=1}}\legendre{a}{\varPi}a^{k-1}\frac{\psi_{l,\mathbf{r}}(a,1)}{\psi_{l,\mathbf{r}}(1,1)}
\end{align*}
which concludes the induction step, hence the whole proof.
\end{proof}

\section{The proof of Main Theorem, Part II}
\label{sec:proof2}
\begin{lemm}
\label{lemm:24Nchar}
Let $N$ be a positive integer and for each positive divisor $n$ of $N$ let $r_n$ be an integer. Set $\mathbf{r}:=(r_n)_{n\mid N}$ and $f_\mathbf{r}=\prod_{n\mid N}\eta(n\tau)^{r_n}$. Let $k$ and $\varPi$ be as in \eqref{eq:kvarPimr}. Let $\varPi'$ be the square-free part of $\varPi$. Suppose that $k\in\numZ$, $\sum_{n\mid N}nr_n\equiv0\bmod{24}$, and that $f_\mathbf{r}$ has no poles at all cusps of $\Gamma_0(N)\backslash\uhp$, then $f_\mathbf{r}\in M_k\left(\Gamma_0(24N),\legendre{D_{\mathbf{r}}}{\cdot}\right)$ where
\begin{equation*}
D_{\mathbf{r}}:=\begin{dcases}
(-1)^k\varPi' & \text{if }(-1)^k\varPi'\equiv1\bmod{4}\\
(-1)^k4\cdot\varPi' & \text{if }(-1)^k\varPi'\equiv2,3\bmod{4}.
\end{dcases}
\end{equation*}
\end{lemm}
This lemma is closely related to \cite[Theorem 1.64]{Ono04}. We present a short proof here for the reader's convenience.
\begin{proof}
By the assumptions we have $f_\mathbf{r}\in M_k\left(\Gamma_0(N),\chi_{\mathbf{r}}\right)$ where $\chi_{\mathbf{r}}\colon\Gamma_0(N)\to\numC^\times$ is the multiplier system of $f_\mathbf{r}$. Using\footnote{See also \cite[Theorem 5.8.1]{CS17}. However, there the formula for $v(\gamma)$ has a minor error when $c=0$ and $d=-1$.} Petersson's formula \cite[eq. (15)]{ZZ23} we have, for $\tbtmat{a}{b}{c}{d}\in\Gamma_0(24N)$, that
\begin{align*}
\chi_{\mathbf{r}}\tbtMat{a}{b}{c}{d}&=\prod_{n\mid N}v_\eta^{r_n}\tbtMat{a}{bn}{c/n}{d}\\
&=\prod_{n\mid N}\left(\legendre{c/n}{d}\etp{\frac{1}{24}\left((a-2d)c/n-bnd((c/n)^2-1)+3d-3\right)}\right)^{r_n}\\
&=\legendre{(-1)^k\varPi}{d}=\legendre{D_{\mathbf{r}}}{d}.
\end{align*}
In the third step above we have used the assumptions $24\mid\sum_{n\mid N}nr_n$ and $24\mid c/n$. This implies $f_\mathbf{r}\in M_k\left(\Gamma_0(24N),\legendre{D_{\mathbf{r}}}{\cdot}\right)$.
\end{proof}
Since the functions $f_\mathbf{r}$ corresponding to the entries $(N, \mathbf{r})$ in Table \ref{table:etaQuotients} are integral-weight ($k=1,2,3$) holomorphic eta-quotients nonvanishing at infinity, we have $f_\mathbf{r}\in M_k\left(\Gamma_0(24N),\legendre{D_{\mathbf{r}}}{\cdot}\right)$ by the above lemma.

\begin{proof}[Proof of Main Theorem, Part II]
Let $(N, \mathbf{r})$ be a pair in Table \ref{table:etaQuotients}, then $f_\mathbf{r}\in M_k\left(\Gamma_0(24N),\legendre{D_{\mathbf{r}}}{\cdot}\right)$ as we have just seen. According to \cite[Theorems 4.5.2, 4.6.2, and 4.8.1]{DS05}, the Eisenstein subspace of $M_k\left(\Gamma_0(24N),\legendre{D_{\mathbf{r}}}{\cdot}\right)$ is generated by a linearly independent subset of $E_k^{\psi,\phi,t}$ where $\psi$ and $\phi$ are primitive Dirichlet characters of conductors $u$ and $v$ respectively with $\psi\phi(-1)=(-1)^k$ and where $t\in\numgeq{Z}{1}$ such that $uvt\mid24N$. (When $k=1$ we have $E_1^{\psi,\phi,t}$=$E_1^{\phi,\psi,t}$, so only one of these two is required.) We now aim to solve the linear equation $f_\mathbf{r}=\sum_{\psi,\phi,t}c_{\psi,\phi,t}E_k^{\psi,\phi,t}$ for $c_{\psi,\phi,t}\in\numC$. By the Sturm bound (c.f., e.g., \cite[Coro. 5.6.14]{CS17}), we need only to solve this equation with each function $f_\mathbf{r}$ and $E_k^{\psi,\phi,t}$ replaced by its $q$-polynomial up to the $q^{[mk/12]}$-term where $m$ is defined by \eqref{eq:mofGamma0N} with $N$ replaced by $24N$. In this way we have solved this equation for all $(N, \mathbf{r})$ in Table \ref{table:etaQuotients} and it is fortunate that all $\psi,\phi$ involved are quadratic characters that can be represented by $\legendre{D}{\cdot}$ with $D$ a fundamental discriminant. All solutions are presented after Table \ref{table:etaQuotients} in Appendix \ref{sec:The table}.

For instance, to prove \eqref{eq:exampetaEis}, note that we have now $N=24$, $m=2^7\cdot3^2$ and $[mk/12]=96$. Hence we shall compare the Fourier expansions of both sides up to $q^{96}$-term. Since both expansions are
\begin{multline*}
1 + q - q^4 + 2q^6 + 2q^7 - 2q^9 - 2q^{10} - 4q^{15} - q^{16} - 2q^{22} + 2q^{24} + 3q^{25} - 2q^{28} + 2q^{31} - 4q^{33} + 2q^{36} - 2q^{40} + 4q^{42} + 3q^{49} \\+ 2q^{54} + 4q^{55} - 2q^{58} + 4q^{60} - 4q^{63} - q^{64} - 4q^{70} + 2q^{73} + 2q^{79} - 2q^{81} - 4q^{87} - 2q^{88} + 4q^{90} + 2q^{96} +O(q^{97})
\end{multline*}
we conclude that \eqref{eq:exampetaEis} holds.
\end{proof}

\section{Some remarks}
\label{sec:remarks}

\textbf{Incompleteness of the list.} Table \ref{table:etaQuotients} is far from complete. This is because even if $\dim_\numC M_k(\Gamma_0(N), \chi_\mathbf{r})\geq2$, $f_\mathbf{r}$ can still be a Hecke eigenform. Another reason is that there may exist $f_\mathbf{r}$ such that $\dim_\numC M_k(\Gamma_0(N), \chi_\mathbf{r})=1$ but Theorem \ref{thm:dimension} is not applicable. Thus, we pose an open problem here: find out all holomorphic eta-quotients $\prod_{n\mid N}\eta(n\tau)^{r_n}$ ($r_n\in\numZ$) such that $\sum_{n\mid N}r_n$ is even, $\sum_{n\mid N}n\cdot r_n=0$ and \eqref{eq:main} holds for $l\equiv1\bmod{m_{\mathbf{r}}}$. The eta-quotients in Martin's list \cite[Table I]{Mar96} with the property $\sum_{n\mid N}n\cdot r_n=0$ are of this kind.

\textbf{The case of half-integral weight.} What can we say about $c_f(l)$ if $\prod_{n\mid N}\eta(n\tau)^{r_n}$ is of half-integral weight? An open problem: try to deduce a uniform closed formula for $c_f(l)$. The first step should be to simplify \eqref{eq:cTlfbycf} by working out the Gauss sum over $b$. This has been done in a special case; cf. \cite[Proposition 9.31]{Zhu24}. Some explicit formulas for Fourier coefficients of weight $3/2$ eta-quotients, among others, were given in \cite{LO13}.

\textbf{Are the functions in Table \ref{table:etaQuotients} common Hecke eigenfunctions?} Let $f_\mathbf{r}$ be the holomorphic eta-quotient corresponding to an entry $(N, \mathbf{r})$ in Table \ref{table:etaQuotients}. By Lemma \ref{lemm:24Nchar} we have $f_\mathbf{r}\in M_k\left(\Gamma_0(N),\chi_{\mathbf{r}}\right)\subseteq M_k\left(\Gamma_0(24N),\legendre{D_{\mathbf{r}}}{\cdot}\right)$. A key fact is that the Hecke operators on $M_k\left(\Gamma_0(N),\chi_{\mathbf{r}}\right)$ and those on $M_k\left(\Gamma_0(24N),\legendre{D_{\mathbf{r}}}{\cdot}\right)$ are totally different. The theory of the operators on $M_k\left(\Gamma_0(24N),\legendre{D_{\mathbf{r}}}{\cdot}\right)$ was first systematically studied by Atkin and Lehner \cite{AL70} and is now a standard tool in the area of classical modular forms. On the other hand, the operators on $M_k\left(\Gamma_0(N),\chi_{\mathbf{r}}\right)$ is less well-known when $\chi_{\mathbf{r}}$ is not induced by Dirichlet characters; c.f. \cite[Prop. 3.2]{ZZ23} and \cite[\S 9.3]{Zhu24}. Since we have using $T_l$ to denote the operators on $M_k\left(\Gamma_0(N),\chi_{\mathbf{r}}\right)$ (see Theorem \ref{thm:cTlfbycf}), let us use $\mathcal{T}_l$ to denote the operators on $M_k\left(\Gamma_0(24N),\legendre{D_{\mathbf{r}}}{\cdot}\right)$; c.f. \cite[Prop 10.2.5(a)]{CS17}. Then there are no reason that the eigenfunctions of $T_l$ and those of $\mathcal{T}_l$ should be the same. This explains the ``paradox'' posed in Section \ref{sec:Introduction}. Let us show this more concretely with the example \eqref{eq:exampetaEis}. By \eqref{eq:Tlfclf} we know that $\eta(\tau)^{-1}\eta(2\tau)^2\eta(3\tau)\eta(6\tau)^{-3}\eta(12\tau)^5\eta(24\tau)^{-2}$ is an eigenfunction of $T_l$ with $l\equiv1\bmod{6}$, which are all possible double coset oeprators on $M_1\left(\Gamma_0(24),\chi_{\mathbf{r}}\right)$. On the other hand, each of $E_1^{-3,8,1}$, $E_1^{-3,8,2}$, $E_1^{-3,8,3}$, $E_1^{-3,8,6}$, $E_1^{-24,1,1}$, $E_1^{-24,1,2}$, $E_1^{-24,1,3}$ and $E_1^{-24,1,6}$ is an eigenfunction of $\mathcal{T}_p$, where $p\neq2,3$ is a prime; c.f. \cite[Prop. 5.2.3]{DS05}. Since their eigenvalues are not necessarily the same, $\eta(\tau)^{-1}\eta(2\tau)^2\eta(3\tau)\eta(6\tau)^{-3}\eta(12\tau)^5\eta(24\tau)^{-2}$ may not be an eigenfunction of $\mathcal{T}_p$. Conversely, $E_1^{-3,8,1}$ (or the other seven) may not be an eigenfunction of $T_l$.

\textbf{The dimension formulas.} Theorem \ref{thm:dimension} is a dimension formula for $M_k(\Gamma_0(N), \chi)$ where $\chi$ is the multiplier system of an eta-quotient $\prod_{n\mid N}\eta(n\tau)^{r_n}$. The dimension formula for $\chi$ being a Dirichlet character and $k\in\numZ$ is well-known; c.f. \cite[Theorem 7.4.1]{CS17}. When both formulas apply, they give the same result. Here we illustrate this by an example. We consider $f=\eta(\tau)^{-6}\eta(2\tau)^{17}\eta(4\tau)^{-7}\in M_{2}(\Gamma_0(96),\legendre{8}{\cdot})$ (see Lemma \ref{lemm:24Nchar}). First we calculate $\dim_\numC M_{2}(\Gamma_0(96),\legendre{8}{\cdot})$ by \cite[Theorem 7.4.1]{CS17}. We have
\begin{align*}
A_1&=\frac{k-1}{12}N\prod_{p\mid N}\left(1+\frac{1}{p}\right)=16,\\
A_2&=\left(\frac{k-1}{3}-\left[\frac{k}{3}\right]\right)\sum_{\twoscript{x\bmod N}{x^2+x+1\equiv0\bmod{N}}}\legendre{8}{x}+\left(\frac{k-1}{4}-\left[\frac{k}{4}\right]\right)\sum_{\twoscript{x\bmod N}{x^2+1\equiv0\bmod{N}}}\legendre{8}{x}=0\\
A_3&=\frac{1}{2}\sum_{\twoscript{0<d\mid N}{(d,N/d)\mid N/8}}\phi(d,N/d)=8,
\end{align*}
and hence $\dim_\numC M_{2}(\Gamma_0(96),\legendre{8}{\cdot})=A_1-A_2+A_3=24$. Then we calculate $\dim_\numC M_{2}(\Gamma_0(96),\legendre{8}{\cdot})$ by Theorem \ref{thm:dimension}. We have
\begin{equation*}
\begin{pmatrix}
x_1\\
x_2\\
x_3\\
x_4\\
x_6\\
x_8\\
x_{12}\\
x_{16}\\
x_{24}\\
x_{32}\\
x_{48}\\
x_{96}
\end{pmatrix}=
\begin{pmatrix}
96 &48 &32 &24 &16 &12 &8 &6 &4 &3 &2 &1 \\
24 &48 &8 &24 &16 &12 &8 &6 &4 &3 &2 &1 \\
32 &16 &96 &8 &48 &4 &24 &2 &12 &1 &6 &3 \\
6 &12 &2 &24 &4 &12 &8 &6 &4 &3 &2 &1 \\
8 &16 &24 &8 &48 &4 &24 &2 &12 &1 &6 &3 \\
3 &6 &1 &12 &2 &24 &4 &12 &8 &6 &4 &2 \\
2 &4 &6 &8 &12 &4 &24 &2 &12 &1 &6 &3 \\
3 &6 &1 &12 &2 &24 &4 &48 &8 &24 &16 &8 \\
1 &2 &3 &4 &6 &8 &12 &4 &24 &2 &12 &6 \\
3 &6 &1 &12 &2 &24 &4 &48 &8 &96 &16 &32 \\
1 &2 &3 &4 &6 &8 &12 &16 &24 &8 &48 &24 \\
1 &2 &3 &4 &6 &8 &12 &16 &24 &32 &48 &96
\end{pmatrix}\cdot\begin{pmatrix}
r_1\\
r_2\\
r_3\\
r_4\\
r_6\\
r_8\\
r_{12}\\
r_{16}\\
r_{24}\\
r_{32}\\
r_{48}\\
r_{96}
\end{pmatrix}=\begin{pmatrix}
72\\
504\\
24\\
0\\
168\\
0\\
0\\
0\\
0\\
0\\
0\\
0
\end{pmatrix}.
\end{equation*}
Inserting this into \eqref{eq:dimension} gives $\dim_\numC M_{2}(\Gamma_0(96),\legendre{8}{\cdot})=16+0+0+8=24$, which coincides with the value given by the classical formula. Also note that if we take the point of view $\eta(\tau)^{-6}\eta(2\tau)^{17}\eta(4\tau)^{-7}\in M_{2}(\Gamma_0(4),\chi)$, then $\chi$ can not be induced by a Dirichlet character modulo $4$. Therefore, \cite[Theorem 7.4.1]{CS17} is not applicable to $\dim_\numC M_{2}(\Gamma_0(4),\chi)$. Indeed, it does follow from the fact $\dim_\numC M_{2}(\Gamma_0(4),\chi)=1$, calculated using Theorem \ref{thm:dimension}, that \eqref{eq:Tlfclf} holds for $f_{\mathbf{r}}=\eta(\tau)^{-6}\eta(2\tau)^{17}\eta(4\tau)^{-7}$.

\textbf{Relations between the two parts of the main theorem.} By the second part of the main theorem, we actually have a formula for all coefficients of $f_{\mathbf{r}}$ since the coefficients of $E_k^{D_1,D_2,t}$ can be seen as explicit. However, the first part also has its independent value since the formula \eqref{eq:main} takes a very simple form and is not a direct consequence of the Eisenstein series decomposition given by the second part, except for $f_{\mathbf{r}}=\eta(\tau)^{-4}\eta(2\tau)^{10}\eta(4\tau)^{-4}$ and $\eta(\tau)^{-2}\eta(2\tau)^3\eta(4\tau)^3\eta(8\tau)^{-2}$.

\textbf{The SageMath code.} The code used to generate Table \ref{table:etaQuotients} is available from the author on reasonable e-mail request. Only by looking at this code can one verify that Table \ref{table:etaQuotients} truly exhausts all admissible eta-quotients of type I (cf. \cite[Section 9]{Zhu24} for the definition) with $x_N=0$. There is as well code used to generate all identities in the second part of the main theorem and is also available from reasonable e-mail requests.

\begin{appendix}
\section{The table}
\label{sec:The table}
The contents of Table \ref{table:etaQuotients} have been explained in the first part of the main theorem and the paragraph following it. We explain the usage here. The item $1^{2}2^{-1}5^{2}10^{-1}$, for instance, means $\mathbf{r}=(r_n)_{n\mid10}$ with $r_1=2$, $r_2=-1$, $r_5=2$ and $r_{10}=-1$. Hence for this choice, $f_\mathbf{r}$ in the main theorem is the eta-quotient
\begin{equation*}
\eta(\tau)^2\eta(2\tau)^{-1}\eta(5\tau)^2\eta(10\tau)^{-1}=\prod_{m=1}^{+\infty}(1-q^m)^2(1-q^{2m})^{-1}(1-q^{5m})^{2}(1-q^{10m})^{-1}.
\end{equation*}
The suffix ``p'' in $10$p means $f_\mathbf{r}$ is a primitive eta-quotient, namely, there exist no eta-quotient $g_{\mathbf{r}'}$ and integer $l\in\numgeq{Z}{2}$ such that $f_\mathbf{r}(\tau)=g_{\mathbf{r}'}(l\tau)$. The first part of our main theorem now says that the coefficient of the $q^l$-term is
\begin{equation*}
-2\cdot\sum_{\twoscript{a\mid l}{(a,10)=1}}\legendre{a}{5}\cdot(-1)^{\frac{a-1}{2}}
\end{equation*}
if $l\equiv1\bmod{4}$. (The information $m_\mathbf{r}$, $k$ and $\delta$ are also listed in the table.) Note that $\varPi'$ in Table \ref{table:etaQuotients} means the square-free part of $\varPi$. Since $\legendre{a}{\varPi}=\legendre{a}{\varPi'}$, it is more convenient to use $\varPi'$.

\begin{longtable}{llllll|llllll}
\caption{Eta-quotients encountered in the main theorem\label{table:etaQuotients}}\\
\toprule
$N$ & $\mathbf{r}$ & $m_\mathbf{r}$ & $k$ & $\delta$ &$\varPi'$ & $N$ & $\mathbf{r}$ & $m_\mathbf{r}$ & $k$ & $\delta$ &$\varPi'$\\
\midrule
\endfirsthead
$N$ & $\mathbf{r}$ & $m_\mathbf{r}$ & $k$ & $\delta$ &$\varPi'$ & $N$ & $\mathbf{r}$ & $m_\mathbf{r}$ & $k$ & $\delta$ &$\varPi'$\\
\midrule
\endhead
$2$p & $1^{4}2^{-2}$ & $4$ & $1$ & $0$ & $1$ & $2$p & $1^{8}2^{-4}$ & $2$ & $2$ & $0$ & $1$\\
$2$p & $1^{12}2^{-6}$ & $4$ & $3$ & $0$ & $1$ & $3$p & $1^{3}3^{-1}$ & $3$ & $1$ & $1$ & $3$\\
$3$p & $1^{6}3^{-2}$ & $3$ & $2$ & $0$ & $1$ & $4$p & $1^{-4}2^{10}4^{-4}$ & $1$ & $1$ & $0$ & $1$\\
$4$p & $1^{-2}2^{7}4^{-3}$ & $8$ & $1$ & $0$ & $2$ & $4$ & $1^{0}2^{4}4^{-2}$ & $4$ & $1$ & $0$ & $1$\\
$4$p & $1^{2}2^{1}4^{-1}$ & $8$ & $1$ & $0$ & $2$ & $4$p & $1^{-6}2^{17}4^{-7}$ & $8$ & $2$ & $0$ & $2$\\
$4$p & $1^{-4}2^{14}4^{-6}$ & $4$ & $2$ & $0$ & $1$ & $4$p & $1^{-2}2^{11}4^{-5}$ & $8$ & $2$ & $0$ & $2$\\
$4$ & $1^{0}2^{8}4^{-4}$ & $2$ & $2$ & $0$ & $1$ & $4$p & $1^{2}2^{5}4^{-3}$ & $8$ & $2$ & $0$ & $2$\\
$4$p & $1^{4}2^{2}4^{-2}$ & $4$ & $2$ & $0$ & $1$ & $4$p & $1^{6}2^{-1}4^{-1}$ & $8$ & $2$ & $0$ & $2$\\
$4$p & $1^{-2}2^{15}4^{-7}$ & $8$ & $3$ & $0$ & $2$ & $4$ & $1^{0}2^{12}4^{-6}$ & $4$ & $3$ & $0$ & $1$\\
$4$p & $1^{2}2^{9}4^{-5}$ & $8$ & $3$ & $0$ & $2$ & $6$p & $1^{-2}2^{4}3^{2}6^{-2}$ & $12$ & $1$ & $0$ & $1$\\
$6$p & $1^{-1}2^{2}3^{3}6^{-2}$ & $6$ & $1$ & $1$ & $3$ & $6$ & $1^{0}2^{0}3^{4}6^{-2}$ & $4$ & $1$ & $0$ & $1$\\
$6$ & $1^{0}2^{3}3^{0}6^{-1}$ & $3$ & $1$ & $1$ & $3$ & $6$p & $1^{1}2^{1}3^{1}6^{-1}$ & $12$ & $1$ & $0$ & $1$\\
$6$p & $1^{2}2^{-1}3^{2}6^{-1}$ & $2$ & $1$ & $1$ & $3$ & $6$p & $1^{0}2^{3}3^{4}6^{-3}$ & $12$ & $2$ & $1$ & $3$\\
$6$p & $1^{2}2^{2}3^{2}6^{-2}$ & $6$ & $2$ & $0$ & $1$ & $6$p & $1^{3}2^{0}3^{3}6^{-2}$ & $12$ & $2$ & $1$ & $3$\\
$8$p & $1^{-2}2^{3}4^{3}8^{-2}$ & $1$ & $1$ & $0$ & $2$ & $8$p & $1^{-2}2^{5}4^{0}8^{-1}$ & $8$ & $1$ & $0$ & $1$\\
$8$ & $1^{0}2^{-2}4^{7}8^{-3}$ & $8$ & $1$ & $0$ & $2$ & $8$ & $1^{0}2^{0}4^{4}8^{-2}$ & $4$ & $1$ & $0$ & $1$\\
$8$ & $1^{0}2^{2}4^{1}8^{-1}$ & $8$ & $1$ & $0$ & $2$ & $8$p & $1^{2}2^{-3}4^{5}8^{-2}$ & $2$ & $1$ & $0$ & $2$\\
$8$p & $1^{2}2^{-1}4^{2}8^{-1}$ & $8$ & $1$ & $0$ & $1$ & $8$p & $1^{-2}2^{5}4^{4}8^{-3}$ & $8$ & $2$ & $0$ & $1$\\
$8$ & $1^{0}2^{2}4^{5}8^{-3}$ & $8$ & $2$ & $0$ & $2$ & $8$ & $1^{0}2^{4}4^{2}8^{-2}$ & $4$ & $2$ & $0$ & $1$\\
$8$ & $1^{0}2^{6}4^{-1}8^{-1}$ & $8$ & $2$ & $0$ & $2$ & $8$p & $1^{2}2^{-1}4^{6}8^{-3}$ & $8$ & $2$ & $0$ & $1$\\
$9$ & $1^{0}3^{3}9^{-1}$ & $3$ & $1$ & $1$ & $3$ & $9$ & $1^{0}3^{6}9^{-2}$ & $3$ & $2$ & $0$ & $1$\\
$10$p & $1^{2}2^{-1}5^{2}10^{-1}$ & $4$ & $1$ & $0$ & $5$ & $12$p & $1^{-2}2^{4}3^{0}4^{0}6^{1}12^{-1}$ & $24$ & $1$ & $0$ & $2$\\
$12$p & $1^{-2}2^{5}3^{0}4^{-2}6^{2}12^{-1}$ & $8$ & $1$ & $1$ & $6$ & $12$p & $1^{-2}2^{5}3^{2}4^{-2}6^{-1}12^{0}$ & $4$ & $1$ & $1$ & $3$\\
$12$p & $1^{-1}2^{2}3^{-1}4^{0}6^{4}12^{-2}$ & $12$ & $1$ & $1$ & $3$ & $12$p & $1^{-1}2^{2}3^{1}4^{0}6^{1}12^{-1}$ & $24$ & $1$ & $1$ & $6$\\
$12$p & $1^{-1}2^{4}3^{-1}4^{-1}6^{2}12^{-1}$ & $3$ & $1$ & $0$ & $1$ & $12$p & $1^{-1}2^{4}3^{1}4^{-1}6^{-1}12^{0}$ & $24$ & $1$ & $0$ & $2$\\
$12$p & $1^{0}2^{-1}3^{-2}4^{2}6^{6}12^{-3}$ & $24$ & $1$ & $1$ & $6$ & $12$ & $1^{0}2^{-1}3^{0}4^{2}6^{3}12^{-2}$ & $6$ & $1$ & $1$ & $3$\\
$12$p & $1^{0}2^{-1}3^{2}4^{2}6^{0}12^{-1}$ & $24$ & $1$ & $1$ & $6$ & $12$ & $1^{0}2^{0}3^{0}4^{0}6^{4}12^{-2}$ & $4$ & $1$ & $0$ & $1$\\
$12$ & $1^{0}2^{1}3^{0}4^{1}6^{1}12^{-1}$ & $12$ & $1$ & $0$ & $1$ & $12$p & $1^{0}2^{2}3^{-2}4^{-1}6^{5}12^{-2}$ & $8$ & $1$ & $1$ & $6$\\
$12$ & $1^{0}2^{2}3^{0}4^{-1}6^{2}12^{-1}$ & $2$ & $1$ & $1$ & $3$ & $12$p & $1^{0}2^{2}3^{2}4^{-1}6^{-1}12^{0}$ & $8$ & $1$ & $1$ & $6$\\
$12$p & $1^{1}2^{-1}3^{-1}4^{1}6^{4}12^{-2}$ & $24$ & $1$ & $1$ & $6$ & $12$p & $1^{1}2^{-1}3^{1}4^{1}6^{1}12^{-1}$ & $12$ & $1$ & $1$ & $3$\\
$12$p & $1^{1}2^{1}3^{-1}4^{0}6^{2}12^{-1}$ & $24$ & $1$ & $0$ & $2$ & $12$p & $1^{2}2^{-2}3^{-2}4^{2}6^{4}12^{-2}$ & $3$ & $1$ & $0$ & $1$\\
$12$p & $1^{2}2^{-2}3^{0}4^{2}6^{1}12^{-1}$ & $24$ & $1$ & $0$ & $2$ & $12$p & $1^{2}2^{-1}3^{-2}4^{0}6^{5}12^{-2}$ & $4$ & $1$ & $1$ & $3$\\
$12$p & $1^{2}2^{-1}3^{0}4^{0}6^{2}12^{-1}$ & $8$ & $1$ & $1$ & $6$ & $12$ & $1^{0}2^{3}3^{0}4^{0}6^{3}12^{-2}$ & $12$ & $2$ & $1$ & $3$\\
$14$ & $1^{0}2^{0}7^{4}14^{-2}$ & $4$ & $1$ & $0$ & $1$ & $15$ & $1^{0}3^{0}5^{3}15^{-1}$ & $3$ & $1$ & $1$ & $3$\\
$16$ & $1^{0}2^{-2}4^{5}8^{0}16^{-1}$ & $8$ & $1$ & $0$ & $1$ & $16$ & $1^{0}2^{0}4^{2}8^{1}16^{-1}$ & $8$ & $1$ & $0$ & $2$\\
$16$ & $1^{0}2^{2}4^{-1}8^{2}16^{-1}$ & $8$ & $1$ & $0$ & $1$ & $16$ & $1^{0}2^{0}4^{6}8^{-1}16^{-1}$ & $8$ & $2$ & $0$ & $2$\\
$18$ & $1^{0}2^{0}3^{-2}6^{4}9^{2}18^{-2}$ & $12$ & $1$ & $0$ & $1$ & $18$ & $1^{0}2^{0}3^{0}6^{3}9^{0}18^{-1}$ & $3$ & $1$ & $1$ & $3$\\
$18$ & $1^{0}2^{0}3^{1}6^{1}9^{1}18^{-1}$ & $12$ & $1$ & $0$ & $1$ & $20$p & $1^{-2}2^{5}4^{-2}5^{0}10^{2}20^{-1}$ & $8$ & $1$ & $0$ & $10$\\
$20$ & $1^{0}2^{0}4^{0}5^{-2}10^{7}20^{-3}$ & $8$ & $1$ & $0$ & $2$ & $20$ & $1^{0}2^{0}4^{0}5^{2}10^{1}20^{-1}$ & $8$ & $1$ & $0$ & $2$\\
$20$p & $1^{0}2^{2}4^{-1}5^{-2}10^{5}20^{-2}$ & $8$ & $1$ & $0$ & $10$ & $20$p & $1^{0}2^{2}4^{-1}5^{2}10^{-1}20^{0}$ & $8$ & $1$ & $0$ & $10$\\
$20$p & $1^{2}2^{-1}4^{0}5^{0}10^{2}20^{-1}$ & $8$ & $1$ & $0$ & $10$ & $24$p & $1^{-2}2^{5}3^{0}4^{-2}6^{0}8^{0}12^{2}24^{-1}$ & $8$ & $1$ & $1$ & $3$\\
$24$p & $1^{-2}2^{6}3^{0}4^{-3}6^{-1}8^{1}12^{2}24^{-1}$ & $3$ & $1$ & $0$ & $2$ & $24$p & $1^{-1}2^{0}3^{1}4^{5}6^{-1}8^{-2}12^{0}24^{0}$ & $6$ & $1$ & $0$ & $2$\\
$24$p & $1^{-1}2^{2}3^{1}4^{-1}6^{-1}8^{2}12^{1}24^{-1}$ & $24$ & $1$ & $0$ & $1$ & $24$p & $1^{-1}2^{2}3^{1}4^{0}6^{-3}8^{0}12^{5}24^{-2}$ & $6$ & $1$ & $1$ & $6$\\
$24$ & $1^{0}2^{-2}3^{0}4^{5}6^{2}8^{-2}12^{-1}24^{0}$ & $4$ & $1$ & $1$ & $3$ & $24$ & $1^{0}2^{-1}3^{0}4^{4}6^{1}8^{-1}12^{-1}24^{0}$ & $24$ & $1$ & $0$ & $2$\\
$24$ & $1^{0}2^{0}3^{-2}4^{0}6^{5}8^{0}12^{0}24^{-1}$ & $8$ & $1$ & $0$ & $1$ & $24$p & $1^{0}2^{0}3^{-2}4^{2}6^{5}8^{-1}12^{-2}24^{0}$ & $8$ & $1$ & $1$ & $3$\\
$24$ & $1^{0}2^{0}3^{0}4^{-1}6^{2}8^{2}12^{0}24^{-1}$ & $24$ & $1$ & $1$ & $6$ & $24$ & $1^{0}2^{0}3^{0}4^{0}6^{0}8^{0}12^{4}24^{-2}$ & $4$ & $1$ & $0$ & $1$\\
$24$ & $1^{0}2^{0}3^{0}4^{1}6^{0}8^{1}12^{1}24^{-1}$ & $12$ & $1$ & $0$ & $1$ & $24$ & $1^{0}2^{0}3^{2}4^{0}6^{-1}8^{0}12^{2}24^{-1}$ & $8$ & $1$ & $0$ & $1$\\
$24$p & $1^{0}2^{0}3^{2}4^{2}6^{-1}8^{-1}12^{0}24^{0}$ & $8$ & $1$ & $1$ & $3$ & $24$p & $1^{0}2^{1}3^{-2}4^{-1}6^{4}8^{1}12^{0}24^{-1}$ & $3$ & $1$ & $1$ & $6$\\
$24$ & $1^{0}2^{1}3^{0}4^{-1}6^{-1}8^{1}12^{4}24^{-2}$ & $24$ & $1$ & $1$ & $6$ & $24$p & $1^{0}2^{1}3^{2}4^{-1}6^{-2}8^{1}12^{2}24^{-1}$ & $6$ & $1$ & $1$ & $6$\\
$24$ & $1^{0}2^{2}3^{0}4^{-2}6^{0}8^{2}12^{1}24^{-1}$ & $24$ & $1$ & $0$ & $2$ & $24$ & $1^{0}2^{2}3^{0}4^{-1}6^{-2}8^{0}12^{5}24^{-2}$ & $4$ & $1$ & $1$ & $3$\\
$24$p & $1^{1}2^{-3}3^{-1}4^{6}6^{2}8^{-2}12^{-1}24^{0}$ & $3$ & $1$ & $0$ & $2$ & $24$p & $1^{1}2^{-1}3^{-1}4^{0}6^{2}8^{2}12^{0}24^{-1}$ & $24$ & $1$ & $0$ & $1$\\
$24$p & $1^{1}2^{-1}3^{-1}4^{1}6^{0}8^{0}12^{4}24^{-2}$ & $3$ & $1$ & $1$ & $6$ & $24$p & $1^{2}2^{-1}3^{0}4^{0}6^{0}8^{0}12^{2}24^{-1}$ & $8$ & $1$ & $1$ & $3$\\
$24$p & $1^{2}2^{0}3^{0}4^{-1}6^{-1}8^{1}12^{2}24^{-1}$ & $6$ & $1$ & $0$ & $2$ & $32$ & $1^{0}2^{0}4^{0}8^{2}16^{1}32^{-1}$ & $8$ & $1$ & $0$ & $2$\\
$32$ & $1^{0}2^{0}4^{2}8^{-1}16^{2}32^{-1}$ & $8$ & $1$ & $0$ & $1$ & $36$ & $1^{0}2^{0}3^{-2}4^{0}6^{4}9^{0}12^{0}18^{1}36^{-1}$ & $24$ & $1$ & $0$ & $2$\\
$36$ & $1^{0}2^{0}3^{-1}4^{0}6^{4}9^{-1}12^{-1}18^{2}36^{-1}$ & $3$ & $1$ & $0$ & $1$ & $36$ & $1^{0}2^{0}3^{-1}4^{0}6^{4}9^{1}12^{-1}18^{-1}36^{0}$ & $24$ & $1$ & $0$ & $2$\\
$36$ & $1^{0}2^{0}3^{1}4^{0}6^{1}9^{-1}12^{0}18^{2}36^{-1}$ & $24$ & $1$ & $0$ & $2$ & $36$ & $1^{0}2^{0}3^{2}4^{0}6^{-2}9^{-2}12^{2}18^{4}36^{-2}$ & $3$ & $1$ & $0$ & $1$\\
$36$ & $1^{0}2^{0}3^{2}4^{0}6^{-2}9^{0}12^{2}18^{1}36^{-1}$ & $24$ & $1$ & $0$ & $2$ & &&&&\\
\bottomrule
\end{longtable}

Below is a list of the identities encountered in the second part of the main theorem, which is presented in a compact form. Since the expansion of a nonprimitive eta-quotient is a direct consequence of the expansion of the corresponding primitive eta-quotient, we only present the identities for 66 primitive eta-quotients in Table \ref{table:etaQuotients}.
\begin{align*}
1^{4}2^{-2}&=[-4, 1, 1]^{-2}[-4, 1, 2]^{4}\\
1^{8}2^{-4}&=[1, 1, 2]^{-12}[1, 1, 4]^{8}\\
1^{12}2^{-6}&=[1, -4, 1]^{2}[1, -4, 2]^{-4}[-4, 1, 1]^{-8}[-4, 1, 2]^{64}\\
1^{3}3^{-1}&=[-3, 1, 1]^{-\frac{3}{2}}[-3, 1, 3]^{\frac{9}{2}}\\
1^{6}3^{-2}&=[1, 1, 3]^{-3}[1, 1, 9]^{\frac{9}{4}}[-3, -3, 1]^{-\frac{9}{4}}\\
1^{-4}2^{10}4^{-4}&=[-4, 1, 1]^{2}\\
1^{-2}2^{7}4^{-3}&=[-4, 8, 1]^{1}[-8, 1, 2]^{-1}[-8, 1, 4]^{2}\\
1^{2}2^{1}4^{-1}&=[-4, 8, 1]^{-1}[-8, 1, 2]^{-1}[-8, 1, 4]^{2}\\
1^{-6}2^{17}4^{-7}&=[1, 8, 2]^{1}[1, 8, 4]^{-2}[-4, -8, 1]^{1}[8, 1, 2]^{4}[8, 1, 4]^{-16}[-8, -4, 1]^{2}\\
1^{-4}2^{14}4^{-6}&=[1, 1, 4]^{1}[1, 1, 8]^{-3}[1, 1, 16]^{2}[-4, -4, 1]^{2}\\
1^{-2}2^{11}4^{-5}&=[1, 8, 2]^{1}[1, 8, 4]^{-2}[-4, -8, 1]^{-1}[8, 1, 2]^{-4}[8, 1, 4]^{16}[-8, -4, 1]^{2}\\
1^{2}2^{5}4^{-3}&=[1, 8, 2]^{1}[1, 8, 4]^{-2}[-4, -8, 1]^{1}[8, 1, 2]^{-4}[8, 1, 4]^{16}[-8, -4, 1]^{-2}\\
1^{4}2^{2}4^{-2}&=[1, 1, 4]^{1}[1, 1, 8]^{-3}[1, 1, 16]^{2}[-4, -4, 1]^{-2}\\
1^{6}2^{-1}4^{-1}&=[1, 8, 2]^{1}[1, 8, 4]^{-2}[-4, -8, 1]^{-1}[8, 1, 2]^{4}[8, 1, 4]^{-16}[-8, -4, 1]^{-2}\\
1^{-2}2^{15}4^{-7}&=[1, -8, 2]^{\frac{1}{3}}[1, -8, 4]^{-\frac{2}{3}}[-4, 8, 1]^{-\frac{1}{3}}[8, -4, 1]^{\frac{4}{3}}[-8, 1, 2]^{-\frac{16}{3}}[-8, 1, 4]^{\frac{128}{3}}\\
1^{2}2^{9}4^{-5}&=[1, -8, 2]^{\frac{1}{3}}[1, -8, 4]^{-\frac{2}{3}}[-4, 8, 1]^{\frac{1}{3}}[8, -4, 1]^{-\frac{4}{3}}[-8, 1, 2]^{-\frac{16}{3}}[-8, 1, 4]^{\frac{128}{3}}\\
1^{-2}2^{4}3^{2}6^{-2}&=[-3, 12, 1]^{\frac{3}{4}}[-3, 12, 2]^{\frac{3}{2}}[-4, 1, 1]^{\frac{1}{4}}[-4, 1, 2]^{-\frac{1}{2}}[-4, 1, 9]^{-\frac{9}{4}}[-4, 1, 18]^{\frac{9}{2}}\\
1^{-1}2^{2}3^{3}6^{-2}&=[-3, 1, 1]^{\frac{1}{2}}[-3, 1, 3]^{-\frac{3}{2}}[-3, 1, 4]^{-2}[-3, 1, 12]^{6}\\
1^{1}2^{1}3^{1}6^{-1}&=[-3, 12, 1]^{-\frac{3}{4}}[-3, 12, 2]^{-\frac{3}{2}}[-4, 1, 1]^{\frac{1}{4}}[-4, 1, 2]^{-\frac{1}{2}}[-4, 1, 9]^{-\frac{9}{4}}[-4, 1, 18]^{\frac{9}{2}}\\
1^{2}2^{-1}3^{2}6^{-1}&=[-3, 1, 1]^{-1}[-3, 1, 4]^{4}\\
1^{0}2^{3}3^{4}6^{-3}&=[1, 12, 1]^{-\frac{1}{4}}[1, 12, 2]^{\frac{1}{2}}[1, 12, 3]^{\frac{3}{4}}[1, 12, 6]^{-\frac{3}{2}}[-3, -4, 1]^{\frac{1}{4}}[-3, -4, 2]^{\frac{1}{2}}\\&[-3, -4, 3]^{\frac{9}{4}}[-3, -4, 6]^{\frac{9}{2}}[-4, -3, 1]^{-\frac{1}{2}}[-4, -3, 2]^{-2}[-4, -3, 3]^{-\frac{3}{2}}[-4, -3, 6]^{-6}\\&[12, 1, 1]^{\frac{1}{2}}[12, 1, 2]^{-2}[12, 1, 3]^{-\frac{9}{2}}[12, 1, 6]^{18}\\
1^{2}2^{2}3^{2}6^{-2}&=[1, 1, 2]^{-\frac{3}{4}}[1, 1, 3]^{-1}[1, 1, 4]^{\frac{1}{2}}[1, 1, 6]^{3}[1, 1, 9]^{\frac{3}{4}}[1, 1, 12]^{-2}\\&[1, 1, 18]^{-\frac{9}{4}}[1, 1, 36]^{\frac{3}{2}}[-3, -3, 1]^{-\frac{3}{4}}[-3, -3, 2]^{-\frac{9}{2}}[-3, -3, 4]^{-6}\\
1^{3}2^{0}3^{3}6^{-2}&=[1, 12, 1]^{-\frac{1}{4}}[1, 12, 2]^{\frac{1}{2}}[1, 12, 3]^{\frac{3}{4}}[1, 12, 6]^{-\frac{3}{2}}[-3, -4, 1]^{-\frac{1}{4}}[-3, -4, 2]^{-\frac{1}{2}}\\&[-3, -4, 3]^{-\frac{9}{4}}[-3, -4, 6]^{-\frac{9}{2}}[-4, -3, 1]^{-\frac{1}{2}}[-4, -3, 2]^{-2}[-4, -3, 3]^{-\frac{3}{2}}[-4, -3, 6]^{-6}\\&[12, 1, 1]^{-\frac{1}{2}}[12, 1, 2]^{2}[12, 1, 3]^{\frac{9}{2}}[12, 1, 6]^{-18}\\
1^{-2}2^{3}4^{3}8^{-2}&=[-8, 1, 1]^{1}\\
1^{-2}2^{5}4^{0}8^{-1}&=[-4, 1, 8]^{-2}[-4, 1, 16]^{4}[-8, 8, 1]^{1}\\
1^{2}2^{-3}4^{5}8^{-2}&=[-8, 1, 1]^{-1}[-8, 1, 2]^{2}\\
1^{2}2^{-1}4^{2}8^{-1}&=[-4, 1, 8]^{-2}[-4, 1, 16]^{4}[-8, 8, 1]^{-1}\\
1^{-2}2^{5}4^{4}8^{-3}&=[1, 1, 16]^{\frac{1}{4}}[1, 1, 32]^{-\frac{3}{4}}[1, 1, 64]^{\frac{1}{2}}[-4, -4, 4]^{-2}[8, 8, 1]^{\frac{1}{2}}[-8, -8, 1]^{\frac{1}{2}}\\
1^{2}2^{-1}4^{6}8^{-3}&=[1, 1, 16]^{\frac{1}{4}}[1, 1, 32]^{-\frac{3}{4}}[1, 1, 64]^{\frac{1}{2}}[-4, -4, 4]^{-2}[8, 8, 1]^{-\frac{1}{2}}[-8, -8, 1]^{-\frac{1}{2}}\\
1^{2}2^{-1}5^{2}10^{-1}&=[-4, 5, 1]^{-\frac{1}{2}}[-4, 5, 2]^{-1}[-20, 1, 1]^{-\frac{1}{2}}[-20, 1, 2]^{1}\\
1^{-2}2^{4}3^{0}4^{0}6^{1}12^{-1}&=[-3, 24, 2]^{\frac{3}{4}}[-3, 24, 4]^{\frac{3}{2}}[-4, 8, 1]^{\frac{1}{4}}[-4, 8, 3]^{\frac{3}{2}}[-4, 8, 9]^{\frac{9}{4}}[-8, 1, 2]^{-\frac{1}{4}}\\&[-8, 1, 4]^{\frac{1}{2}}[-8, 1, 6]^{\frac{3}{2}}[-8, 1, 12]^{-3}[-8, 1, 18]^{-\frac{9}{4}}[-8, 1, 36]^{\frac{9}{2}}[-24, 12, 1]^{\frac{3}{4}}\\
1^{-2}2^{5}3^{0}4^{-2}6^{2}12^{-1}&=[-3, 8, 2]^{\frac{1}{2}}[-3, 8, 4]^{1}[-4, 24, 1]^{\frac{1}{2}}[-8, 12, 1]^{\frac{1}{2}}[-24, 1, 2]^{-\frac{1}{2}}[-24, 1, 4]^{1}\\
1^{-2}2^{5}3^{2}4^{-2}6^{-1}12^{0}&=[-3, 1, 4]^{-1}[-3, 1, 16]^{4}[-4, 12, 1]^{1}\\
1^{-1}2^{2}3^{-1}4^{0}6^{4}12^{-2}&=[-3, 1, 4]^{\frac{1}{2}}[-3, 1, 12]^{-\frac{3}{2}}[-3, 1, 16]^{-2}[-3, 1, 48]^{6}[-4, 12, 1]^{\frac{1}{2}}[-4, 12, 3]^{\frac{3}{2}}\\
1^{-1}2^{2}3^{1}4^{0}6^{1}12^{-1}&=[-3, 8, 2]^{-\frac{1}{4}}[-3, 8, 4]^{-\frac{1}{2}}[-3, 8, 6]^{-\frac{3}{4}}[-3, 8, 12]^{-\frac{3}{2}}[-4, 24, 1]^{\frac{1}{4}}[-4, 24, 3]^{\frac{3}{4}}\\&[-8, 12, 1]^{\frac{1}{4}}[-8, 12, 3]^{-\frac{3}{4}}[-24, 1, 2]^{\frac{1}{4}}[-24, 1, 4]^{-\frac{1}{2}}[-24, 1, 6]^{-\frac{3}{4}}[-24, 1, 12]^{\frac{3}{2}}\\
1^{-1}2^{4}3^{-1}4^{-1}6^{2}12^{-1}&=[-3, 12, 1]^{\frac{3}{4}}[-4, 1, 1]^{-\frac{1}{4}}[-4, 1, 9]^{\frac{9}{4}}\\
1^{-1}2^{4}3^{1}4^{-1}6^{-1}12^{0}&=[-3, 24, 2]^{-\frac{3}{4}}[-3, 24, 4]^{-\frac{3}{2}}[-4, 8, 1]^{-\frac{1}{4}}[-4, 8, 3]^{-\frac{3}{2}}[-4, 8, 9]^{-\frac{9}{4}}[-8, 1, 2]^{-\frac{1}{4}}\\&[-8, 1, 4]^{\frac{1}{2}}[-8, 1, 6]^{\frac{3}{2}}[-8, 1, 12]^{-3}[-8, 1, 18]^{-\frac{9}{4}}[-8, 1, 36]^{\frac{9}{2}}[-24, 12, 1]^{\frac{3}{4}}\\
1^{0}2^{-1}3^{-2}4^{2}6^{6}12^{-3}&=[-3, 8, 2]^{\frac{1}{4}}[-3, 8, 4]^{\frac{1}{2}}[-3, 8, 6]^{\frac{3}{4}}[-3, 8, 12]^{\frac{3}{2}}[-4, 24, 1]^{\frac{1}{4}}[-4, 24, 3]^{\frac{3}{4}}\\&[-8, 12, 1]^{-\frac{1}{4}}[-8, 12, 3]^{\frac{3}{4}}[-24, 1, 2]^{\frac{1}{4}}[-24, 1, 4]^{-\frac{1}{2}}[-24, 1, 6]^{-\frac{3}{4}}[-24, 1, 12]^{\frac{3}{2}}\\
1^{0}2^{-1}3^{2}4^{2}6^{0}12^{-1}&=[-3, 8, 2]^{\frac{1}{4}}[-3, 8, 4]^{\frac{1}{2}}[-3, 8, 6]^{\frac{3}{4}}[-3, 8, 12]^{\frac{3}{2}}[-4, 24, 1]^{-\frac{1}{4}}[-4, 24, 3]^{-\frac{3}{4}}\\&[-8, 12, 1]^{\frac{1}{4}}[-8, 12, 3]^{-\frac{3}{4}}[-24, 1, 2]^{\frac{1}{4}}[-24, 1, 4]^{-\frac{1}{2}}[-24, 1, 6]^{-\frac{3}{4}}[-24, 1, 12]^{\frac{3}{2}}\\
1^{0}2^{2}3^{-2}4^{-1}6^{5}12^{-2}&=[-3, 8, 2]^{-\frac{1}{2}}[-3, 8, 4]^{-1}[-4, 24, 1]^{-\frac{1}{2}}[-8, 12, 1]^{\frac{1}{2}}[-24, 1, 2]^{-\frac{1}{2}}[-24, 1, 4]^{1}\\
1^{0}2^{2}3^{2}4^{-1}6^{-1}12^{0}&=[-3, 8, 2]^{-\frac{1}{2}}[-3, 8, 4]^{-1}[-4, 24, 1]^{\frac{1}{2}}[-8, 12, 1]^{-\frac{1}{2}}[-24, 1, 2]^{-\frac{1}{2}}[-24, 1, 4]^{1}\\
1^{1}2^{-1}3^{-1}4^{1}6^{4}12^{-2}&=[-3, 8, 2]^{-\frac{1}{4}}[-3, 8, 4]^{-\frac{1}{2}}[-3, 8, 6]^{-\frac{3}{4}}[-3, 8, 12]^{-\frac{3}{2}}[-4, 24, 1]^{-\frac{1}{4}}[-4, 24, 3]^{-\frac{3}{4}}\\&[-8, 12, 1]^{-\frac{1}{4}}[-8, 12, 3]^{\frac{3}{4}}[-24, 1, 2]^{\frac{1}{4}}[-24, 1, 4]^{-\frac{1}{2}}[-24, 1, 6]^{-\frac{3}{4}}[-24, 1, 12]^{\frac{3}{2}}\\
1^{1}2^{-1}3^{1}4^{1}6^{1}12^{-1}&=[-3, 1, 4]^{\frac{1}{2}}[-3, 1, 12]^{-\frac{3}{2}}[-3, 1, 16]^{-2}[-3, 1, 48]^{6}[-4, 12, 1]^{-\frac{1}{2}}[-4, 12, 3]^{-\frac{3}{2}}\\
1^{1}2^{1}3^{-1}4^{0}6^{2}12^{-1}&=[-3, 24, 2]^{-\frac{3}{4}}[-3, 24, 4]^{-\frac{3}{2}}[-4, 8, 1]^{\frac{1}{4}}[-4, 8, 3]^{\frac{3}{2}}[-4, 8, 9]^{\frac{9}{4}}[-8, 1, 2]^{-\frac{1}{4}}\\&[-8, 1, 4]^{\frac{1}{2}}[-8, 1, 6]^{\frac{3}{2}}[-8, 1, 12]^{-3}[-8, 1, 18]^{-\frac{9}{4}}[-8, 1, 36]^{\frac{9}{2}}[-24, 12, 1]^{-\frac{3}{4}}\\
1^{2}2^{-2}3^{-2}4^{2}6^{4}12^{-2}&=[-3, 12, 1]^{-\frac{3}{4}}[-4, 1, 1]^{-\frac{1}{4}}[-4, 1, 9]^{\frac{9}{4}}\\
1^{2}2^{-2}3^{0}4^{2}6^{1}12^{-1}&=[-3, 24, 2]^{\frac{3}{4}}[-3, 24, 4]^{\frac{3}{2}}[-4, 8, 1]^{-\frac{1}{4}}[-4, 8, 3]^{-\frac{3}{2}}[-4, 8, 9]^{-\frac{9}{4}}[-8, 1, 2]^{-\frac{1}{4}}\\&[-8, 1, 4]^{\frac{1}{2}}[-8, 1, 6]^{\frac{3}{2}}[-8, 1, 12]^{-3}[-8, 1, 18]^{-\frac{9}{4}}[-8, 1, 36]^{\frac{9}{2}}[-24, 12, 1]^{-\frac{3}{4}}\\
1^{2}2^{-1}3^{-2}4^{0}6^{5}12^{-2}&=[-3, 1, 4]^{-1}[-3, 1, 16]^{4}[-4, 12, 1]^{-1}\\
1^{2}2^{-1}3^{0}4^{0}6^{2}12^{-1}&=[-3, 8, 2]^{\frac{1}{2}}[-3, 8, 4]^{1}[-4, 24, 1]^{-\frac{1}{2}}[-8, 12, 1]^{-\frac{1}{2}}[-24, 1, 2]^{-\frac{1}{2}}[-24, 1, 4]^{1}\\
1^{-2}2^{5}4^{-2}5^{0}10^{2}20^{-1}&=[-4, 40, 1]^{\frac{1}{2}}[-8, 5, 2]^{\frac{1}{2}}[-8, 5, 4]^{1}[-20, 8, 1]^{\frac{1}{2}}[-40, 1, 2]^{-\frac{1}{2}}[-40, 1, 4]^{1}\\
1^{0}2^{2}4^{-1}5^{-2}10^{5}20^{-2}&=[-4, 40, 1]^{\frac{1}{2}}[-8, 5, 2]^{-\frac{1}{2}}[-8, 5, 4]^{-1}[-20, 8, 1]^{-\frac{1}{2}}[-40, 1, 2]^{-\frac{1}{2}}[-40, 1, 4]^{1}\\
1^{0}2^{2}4^{-1}5^{2}10^{-1}20^{0}&=[-4, 40, 1]^{-\frac{1}{2}}[-8, 5, 2]^{-\frac{1}{2}}[-8, 5, 4]^{-1}[-20, 8, 1]^{\frac{1}{2}}[-40, 1, 2]^{-\frac{1}{2}}[-40, 1, 4]^{1}\\
1^{2}2^{-1}4^{0}5^{0}10^{2}20^{-1}&=[-4, 40, 1]^{-\frac{1}{2}}[-8, 5, 2]^{\frac{1}{2}}[-8, 5, 4]^{1}[-20, 8, 1]^{-\frac{1}{2}}[-40, 1, 2]^{-\frac{1}{2}}[-40, 1, 4]^{1}\\
1^{-2}2^{5}3^{0}4^{-2}6^{0}8^{0}12^{2}24^{-1}&=[-3, 1, 16]^{-1}[-3, 1, 64]^{4}[-4, 12, 4]^{1}[-8, 24, 1]^{\frac{1}{2}}[-24, 8, 1]^{\frac{1}{2}}\\
1^{-2}2^{6}3^{0}4^{-3}6^{-1}8^{1}12^{2}24^{-1}&=[-3, 24, 1]^{\frac{3}{4}}[-8, 1, 1]^{\frac{1}{4}}[-8, 1, 3]^{-\frac{3}{2}}[-8, 1, 9]^{\frac{9}{4}}\\
1^{-1}2^{0}3^{1}4^{5}6^{-1}8^{-2}12^{0}24^{0}&=[-3, 24, 1]^{\frac{3}{4}}[-3, 24, 2]^{\frac{3}{2}}[-8, 1, 1]^{-\frac{1}{4}}[-8, 1, 2]^{\frac{1}{2}}[-8, 1, 3]^{\frac{3}{2}}[-8, 1, 6]^{-3}\\&[-8, 1, 9]^{-\frac{9}{4}}[-8, 1, 18]^{\frac{9}{2}}\\
1^{-1}2^{2}3^{1}4^{-1}6^{-1}8^{2}12^{1}24^{-1}&=[-3, 12, 8]^{-\frac{3}{4}}[-3, 12, 16]^{-\frac{3}{2}}[-4, 1, 8]^{\frac{1}{4}}[-4, 1, 16]^{-\frac{1}{2}}[-4, 1, 72]^{-\frac{9}{4}}[-4, 1, 144]^{\frac{9}{2}}\\&[-8, 8, 1]^{\frac{1}{8}}[-8, 8, 9]^{-\frac{9}{8}}[-24, 24, 1]^{\frac{3}{8}}\\
1^{-1}2^{2}3^{1}4^{0}6^{-3}8^{0}12^{5}24^{-2}&=[-3, 8, 1]^{\frac{1}{4}}[-3, 8, 2]^{\frac{1}{2}}[-3, 8, 3]^{\frac{3}{4}}[-3, 8, 6]^{\frac{3}{2}}[-24, 1, 1]^{\frac{1}{4}}[-24, 1, 2]^{-\frac{1}{2}}\\&[-24, 1, 3]^{-\frac{3}{4}}[-24, 1, 6]^{\frac{3}{2}}\\
1^{0}2^{0}3^{-2}4^{2}6^{5}8^{-1}12^{-2}24^{0}&=[-3, 1, 16]^{-1}[-3, 1, 64]^{4}[-4, 12, 4]^{-1}[-8, 24, 1]^{\frac{1}{2}}[-24, 8, 1]^{-\frac{1}{2}}\\
1^{0}2^{0}3^{2}4^{2}6^{-1}8^{-1}12^{0}24^{0}&=[-3, 1, 16]^{-1}[-3, 1, 64]^{4}[-4, 12, 4]^{-1}[-8, 24, 1]^{-\frac{1}{2}}[-24, 8, 1]^{\frac{1}{2}}\\
1^{0}2^{1}3^{-2}4^{-1}6^{4}8^{1}12^{0}24^{-1}&=[-3, 8, 1]^{\frac{1}{4}}[-3, 8, 3]^{\frac{3}{4}}[-24, 1, 1]^{-\frac{1}{4}}[-24, 1, 3]^{\frac{3}{4}}\\
1^{0}2^{1}3^{2}4^{-1}6^{-2}8^{1}12^{2}24^{-1}&=[-3, 8, 1]^{-\frac{1}{4}}[-3, 8, 2]^{-\frac{1}{2}}[-3, 8, 3]^{-\frac{3}{4}}[-3, 8, 6]^{-\frac{3}{2}}[-24, 1, 1]^{\frac{1}{4}}[-24, 1, 2]^{-\frac{1}{2}}\\&[-24, 1, 3]^{-\frac{3}{4}}[-24, 1, 6]^{\frac{3}{2}}\\
1^{1}2^{-3}3^{-1}4^{6}6^{2}8^{-2}12^{-1}24^{0}&=[-3, 24, 1]^{-\frac{3}{4}}[-8, 1, 1]^{\frac{1}{4}}[-8, 1, 3]^{-\frac{3}{2}}[-8, 1, 9]^{\frac{9}{4}}\\
1^{1}2^{-1}3^{-1}4^{0}6^{2}8^{2}12^{0}24^{-1}&=[-3, 12, 8]^{-\frac{3}{4}}[-3, 12, 16]^{-\frac{3}{2}}[-4, 1, 8]^{\frac{1}{4}}[-4, 1, 16]^{-\frac{1}{2}}[-4, 1, 72]^{-\frac{9}{4}}[-4, 1, 144]^{\frac{9}{2}}\\&[-8, 8, 1]^{-\frac{1}{8}}[-8, 8, 9]^{\frac{9}{8}}[-24, 24, 1]^{-\frac{3}{8}}\\
1^{1}2^{-1}3^{-1}4^{1}6^{0}8^{0}12^{4}24^{-2}&=[-3, 8, 1]^{-\frac{1}{4}}[-3, 8, 3]^{-\frac{3}{4}}[-24, 1, 1]^{-\frac{1}{4}}[-24, 1, 3]^{\frac{3}{4}}\\
1^{2}2^{-1}3^{0}4^{0}6^{0}8^{0}12^{2}24^{-1}&=[-3, 1, 16]^{-1}[-3, 1, 64]^{4}[-4, 12, 4]^{1}[-8, 24, 1]^{-\frac{1}{2}}[-24, 8, 1]^{-\frac{1}{2}}\\
1^{2}2^{0}3^{0}4^{-1}6^{-1}8^{1}12^{2}24^{-1}&=[-3, 24, 1]^{-\frac{3}{4}}[-3, 24, 2]^{-\frac{3}{2}}[-8, 1, 1]^{-\frac{1}{4}}[-8, 1, 2]^{\frac{1}{2}}[-8, 1, 3]^{\frac{3}{2}}[-8, 1, 6]^{-3}\\&[-8, 1, 9]^{-\frac{9}{4}}[-8, 1, 18]^{\frac{9}{2}}\\
\end{align*}
\end{appendix}

We explain the usage using an example $1^{4}2^{-2}=[-4, 1, 1]^{-2}[-4, 1, 2]^{4}$. Here $1^{4}2^{-2}$ means $\eta(\tau)^4\eta(2\tau)^{-2}$ as in Table \ref{table:etaQuotients}, $[-4, 1, 1]^{-2}$ and $[-4, 1, 2]^{4}$ means $-2E_{1}^{-4,1,1}$ and $4E_{1}^{-4,1,2}$ respectively. Thus this formal identity means $\eta(\tau)^4\eta(2\tau)^{-2}=-2E_{1}^{-4,1,1}+4E_{1}^{-4,1,2}$. Note that the weight $k$ is not shown in the notation $[D_1,D_2,t]^c$ for saving space, and it should be deduced from the eta-quotient.

\section*{Acknowledgments}
This work is supported in part by Science and Technology Commission of Shanghai Municipality (No. 22DZ2229014). The author would like to thank the anonymous referees, who provide valuable and constructive advice which substantially improves this work.

\bibliographystyle{plain}
\bibliography{ref}

\begin{thebibliography}{10}

\bibitem{AL70}
A.~O.~L. Atkin and J.~Lehner.
\newblock Hecke operators on {$\Gamma \sb{0}(m)$}.
\newblock {\em Math. Ann.}, 185:134--160, 1970.

\bibitem{AI23}
Banu~\.Irez Ayd\i~n and Ilker Inam.
\newblock On the {H}ecke eigenforms of half-integral weight and {D}edekind-eta
  products.
\newblock {\em Asian-Eur. J. Math.}, 16(7):Paper No. 2350131, 6, 2023.

\bibitem{Bha17}
Soumya Bhattacharya.
\newblock Holomorphic eta quotients of weight 1/2.
\newblock {\em Adv. Math.}, 320:1185--1200, 2017.

\bibitem{Bha21}
Soumya Bhattacharya.
\newblock Special factors of holomorphic eta quotients.
\newblock {\em Adv. Math.}, 392:Paper No. 108019, 24, 2021.

\bibitem{CKL19}
Dohoon Choi, Byungchan Kim, and Subong Lim.
\newblock Pairs of eta-quotients with dual weights and their applications.
\newblock {\em Adv. Math.}, 355:106779, 51, 2019.

\bibitem{CS17}
Henri Cohen and Fredrik Str\"omberg.
\newblock {\em Modular forms}, volume 179 of {\em Graduate Studies in
  Mathematics}.
\newblock American Mathematical Society, Providence, RI, 2017.
\newblock A classical approach.

\bibitem{DS05}
Fred Diamond and Jerry Shurman.
\newblock {\em A first course in modular forms}, volume 228 of {\em Graduate
  Texts in Mathematics}.
\newblock Springer-Verlag, New York, 2005.

\bibitem{Fin88}
Nathan~J. Fine.
\newblock {\em Basic hypergeometric series and applications}, volume~27 of {\em
  Mathematical Surveys and Monographs}.
\newblock American Mathematical Society, Providence, RI, 1988.
\newblock With a foreword by George E. Andrews.

\bibitem{GH93}
Basil Gordon and Kim Hughes.
\newblock Multiplicative properties of {$\eta$}-products. {II}.
\newblock In {\em A tribute to {E}mil {G}rosswald: number theory and related
  analysis}, volume 143 of {\em Contemp. Math.}, pages 415--430. Amer. Math.
  Soc., Providence, RI, 1993.

\bibitem{KG11}
G\"unter K\"ohler.
\newblock {\em Eta products and theta series identities}.
\newblock Springer Monographs in Mathematics. Springer, Heidelberg, 2011.

\bibitem{LO13}
Robert~J. Lemke~Oliver.
\newblock Eta-quotients and theta functions.
\newblock {\em Adv. Math.}, 241:1--17, 2013.

\bibitem{Mar96}
Yves Martin.
\newblock Multiplicative {$\eta$}-quotients.
\newblock {\em Trans. Amer. Math. Soc.}, 348(12):4825--4856, 1996.

\bibitem{MW24}
Florian M\"unkel and Kenneth~S. Williams.
\newblock Explicit formulas for the {F}ourier coefficients of a class of eta
  quotients.
\newblock {\em Hardy-Ramanujan J.}, 47:65--91, 2024.

\bibitem{Oga18}
Takeshi Ogasawara.
\newblock Some expressions for binary theta series by eta-quotients and their
  applications.
\newblock {\em J. Number Theory}, 186:493--509, 2018.

\bibitem{Ono04}
Ken Ono.
\newblock {\em The web of modularity: arithmetic of the coefficients of modular
  forms and {$q$}-series}, volume 102 of {\em CBMS Regional Conference Series
  in Mathematics}.
\newblock Conference Board of the Mathematical Sciences, Washington, DC; by the
  American Mathematical Society, Providence, RI, 2004.

\bibitem{Sage}
SageMath.
\newblock The {S}age mathematics software system (version 10.4).
\newblock {\em The Sage Developers}, 2024.

\bibitem{Wil12}
Kenneth~S. Williams.
\newblock Fourier series of a class of eta quotients.
\newblock {\em Int. J. Number Theory}, 8(4):993--1004, 2012.

\bibitem{Woh57}
Klaus Wohlfahrt.
\newblock \"{U}ber {O}peratoren {H}eckescher {A}rt bei {M}odulformen reeller
  {D}imension.
\newblock {\em Math. Nachr.}, 16:233--256, 1957.

\bibitem{YXJ13}
Olivia X.~M. Yao, Ernest X.~W. Xia, and J.~Jin.
\newblock Explicit formulas for the {F}ourier coefficients of a class of eta
  quotients.
\newblock {\em Int. J. Number Theory}, 9(2):487--503, 2013.

\bibitem{ZZ23}
Hai-Gang Zhou and Xiao-Jie Zhu.
\newblock Double coset operators and eta-quotients.
\newblock {\em J. Number Theory}, 249:537--601, 2023.

\bibitem{Zhu24}
Xiao-Jie Zhu.
\newblock Dimension formulas for modular form spaces of rational weights, the
  classification of eta-quotient characters and an extension of {M}artin's
  theorem, 2024.

\end{thebibliography}

\end{document}